\newtheorem{theorem}{Theorem}[section]
\newtheorem{cor}[theorem]{Corollary}
\newtheorem{defi}[theorem]{Definition}
\newtheorem{proposition}[theorem]{Proposition}
\newtheorem{assumption}[theorem]{Assumption}
\newtheorem{remark}[theorem]{Remark}
\newcommand*{\dif}{\mathop{}\!\mathrm{d}}
\newcommand{\R}{\mathbb{R}}
\title[Photography transforms and their inversion formulas]{The photography transforms and their analytic inversion formulas}
\author{Duo Liu}
\address{School of Mathematics and Statistics, Beijing Jiaotong University, Beijing 100044, China}
\email{20118001@bjtu.edu.cn}
\author{Gangrong Qu}
\address{School of Mathematics and Statistics, Beijing Jiaotong University, Beijing 100044, China}
\email{grqu@bjtu.edu.cn}
\author{Shan Gao}
\address{School of Statistics and Data Science, Beijing Wuzi University, Beijing 101149, China}
\email{gaoshan@bwu.edu.cn}
\begin{document}
\begin{abstract}
The light field reconstruction from the focal stack can be mathematically formulated as an ill-posed integral equation inversion problem. Although the previous research about this problem has made progress both in practice and theory, its forward problem and inversion in a general form still need to be studied. In this paper, to model the forward problem rigorously, we propose three types of photography transforms with different integral geometry characteristics that extend the forward operator to the arbitrary $n$-dimensional case. We prove that these photography transforms are equivalent to the Radon transform with the coupling relation between variables. We also obtain some properties of the photography transforms, including the Fourier slice theorem, the convolution theorem, and the convolution property of the dual operator, which are very similar to those of the classic Radon transform. Furthermore, the representation of the normal operator and the analytic inversion formula for the photography transforms are derived and they are quite different from those of the classic Radon transform.
\end{abstract}

\subjclass[2020]{45P05, 45Q05, 44A12, 45A05}
	
\keywords{photography transform, light field reconstruction, Radon transform, coupling relation, inversion, integral equation}

\maketitle

\section{\bf Introduction}
A light field is modeled as a function that describes the radiance of all light rays coming from a position along a particular direction in space \cite{GA}. Since the light field records rich high-dimensional visual information of scene \cite{AE}, it can be widely used in various important applications, such as digital refocusing \cite{Ng1}, the depth reconstruction \cite{LJ}, the 3-D scene reconstruction \cite{KC}, and intelligent detection \cite{LN}. Hence, it is a core problem to get a high-quality light field. 

In addition to obtaining the light field from some physical systems directly \cite{PC,WB1,WB2}, an economical, convenient, and flexible way is to reconstruct the light field from the focal stack \cite{AR} which is a set of films focused at different depths. The focal stack is defined as an integral as follows,
\begin{align}
g_{F^{\prime}}(\bar{x}_1,\bar{x}_2) 
&= |F^{\prime}|^{-2} \notag\\
&\ \times \int_{\R^2} f_{F}\left(\frac{F}{F^{\prime}} \bar{x}_1 + \left(1-\frac{F}{F^{\prime}}\right)u_1,\frac{F}{F^{\prime}} \bar{x}_2 + \left(1-\frac{F}{F^{\prime}}\right)u_2,u_1,u_2  \right) \dif u_1 u_2, \label{1.1} \end{align}
where $F$ is the depth of the reference plane from the lens plane, $F^{\prime}$ is the separation between the lens plane and the arbitrary focal plane, $g_{F^{\prime}}$ represents the focal stack at depth $F^{\prime}$, $f_{F}$ is the light field parameterized by the lens plane and the reference plane (see Figure \ref{figure1}). Equation (\ref{1.1}) is the forward operator that models the focal stack obtained by a given light field. On the contrary, if $g_{F^{\prime}}$ is given for any $F^{\prime}$, then the problem of how to reconstruct the light field from the focal stack can be mathematically expressed as an inverse problem how to get $f_F$ from $g_{F^{\prime}}$ by solving the integral equation (\ref{1.1}). 

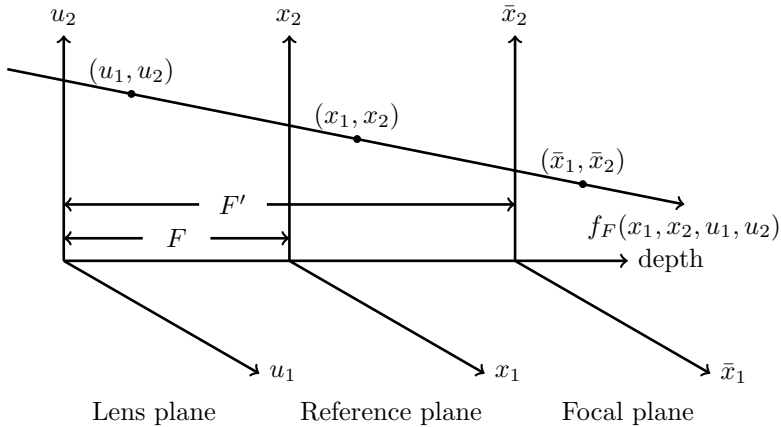
\begin{figure}[htb]
\centering
\begin{tikzpicture}[scale=1.5]
\draw[->][line width =1pt]  (-2,0) -- (-2,2) node[above] {$u_2$};
\draw[->][line width =1pt]  (-2,0) -- (-0.267,-1) node[right]{$u_1$};
\draw[->][line width =1pt]  (0,0) -- (0,2) node[above] {$x_2$};
\draw[->][line width =1pt]  (0,0) -- (1.73,-1) node[right] {$x_1$};
\draw[->][line width =1pt]  (2,0) -- (2,2) node[above] {$\bar{x}_2$};
\draw[->][line width =1pt]  (2,0) -- (3.73,-1) node[right] {$\bar{x}_1$};
\draw[->][line width =1pt]  (-2,0) -- (3,0) node[right] {depth};
\draw[->][line width =1pt]  (-2.5,1.7) -- (3.5,0.5);
\draw[->][line width =1pt]  (-0.8,0.5) -- (-2,0.5); 
\draw[->][line width =1pt]  (-0.2,0.5) -- (2,0.5); 
\draw[->][line width =1pt]  (-1.3,0.2) -- (-2,0.2); 
\draw[->][line width =1pt]  (-0.7,0.2) -- (0,0.2); 
\node at(-0.5,0.5) {$F^{\prime}$};
\node at(-1,0.2) {$F$};
\node at(3.5,0.3) {$f_F(x_1,x_2,u_1,u_2)$};
\coordinate[label=90:${(u_1,u_2)}$] (A) at (-1.4,1.48);
\coordinate[label=90:${(x_1,x_2)}$] (B) at (0.6,1.08);
\coordinate[label=90:${(\bar{x}_1,\bar{x}_2)}$] (C) at (2.6,0.68);
\filldraw (A) circle (.03)
(B) circle (.03)
(C) circle (.03);
\node at(-1.2,-1.35) {Lens plane};
\node at(3,-1.35) {Focal plane};
\node at(0.9,-1.35) {Reference plane};
\end{tikzpicture}
\caption{The light field $f_F$ in lens plane $u_1 u_2$, reference plane $x_1 x_2$, and focal plane $\bar{x}_1\bar{x}_2$. $f_F(u_1,u_2,x_1,x_2)$ is the radiance along the given ray passing through the points $(u_1,u_2)$ and $(x_1,x_2)$.}
\label{figure1}
\end{figure}

We briefly summarize previous research on equation (\ref{1.1}) from the perspective of the forward problem and the inverse problem, respectively. On the one hand, it is noticeable that the forward operator is a linear integral operator. In Ren's pioneering work \cite{Ng2}, based on a mathematical description of the forward problem, two key properties of the forward operator were given, including the Fourier slice theorem and the convolution theorem. These properties are quite similar to those satisfied by the classic Radon transform. Furthermore, other works also noticed the potential relation between the forward operator and the classic Radon transform formally \cite{LC,PF}. In terms of numerical implementation, the fast discretization of equation (\ref{1.1}) was considered in \cite{MH,NF}.

On the other hand, the inversion of the integral equation (\ref{1.1}) is an ill-posed linear inverse problem. Analogous to the analytic reconstruction method \cite{Na,PX} and algebraic iterative scheme \cite{An,JM,Qu} for the inversion of the classic Radon transform, some methods have been established in light field reconstruction from the focal stack, including the filtered back-projection (FBP)-like method \cite{LA,LC} and the weighted Landweber iterative scheme \cite{LC,YX}. Since the collection of the focal stack is under-sampled in practice, the optimization method based on the light field prior \cite{GS} and data-driven unrolling algorithm \cite{CY} were also applied to solving the integral equation (\ref{1.1}). 

Even though progress for equation (\ref{1.1}) has been made as above, four problems remain in theory. First of all, the absence of a rigorous definition for the forward operator in a more general space hinders our in-depth exploration of this light field reconstruction problem. Secondly, the relation of the Radon transform to the forward operator is not clear. Next, the properties of the forward operator are not fully obtained since the adjoint of the forward operator has not been defined so far. Finally, there are only some specific inversion methods, lacking a general analytic inversion formula.

The goal of the present paper is to study the theory of some general photography transforms. The theory provides solutions to the above four problems correspondingly. Firstly, three generalized forms of photography transform with different integral geometry characteristics are proposed, which all arise from the forward problem of the light field reconstruction from the focal stack. Secondly, the equivalence between the photography transforms and the coupled Radon transform proposed in this paper is given. Thirdly, a series of new properties satisfied by both these photography transforms and their dual operators are obtained. Fourthly, general inversion formulas for the photography transforms are established and these formulas can derive the FBP method and back-projection filtered (BPF) method.

The paper is organized as follows. In section \ref{se:Definition}, we generalize the integral equation (\ref{1.1}) from the actual scene directly, and then propose a definition of the photography transform. In section \ref{se:property}, we study the properties of the photography transform by which the previous results for the forward operator of the integral equation (\ref{1.1}) are extended. In section \ref{se:inversion}, we derive the analytic inversion formulas for the 1-dimensional and 2-dimensional photography transform as well as the corresponding analytic reconstruction algorithms. Other two types of photography transforms are defined and discussed in section \ref{se:extension}.

\section{\bf Definition of the photography transform}\label{se:Definition}

In this section, we introduce a photography transform to model the general forward problem which comes from the light field reconstruction from the focal stack. 

We begin with the physical background of a conventional camera model. The simple camera model is as follows. Let $f_F(x_1,x_2,u_1,u_2)$ be the radiance along a ray inside the camera from $(u_1,u_2)$ on the lens plane to $(x_1,x_2)$ on the sensor plane, where we use two-plane to parameterize the light field $f_F$ (see Figure \ref{figure2}) and $F$ is the distance between the plane of the lens and the plane of the sensor. Assume that the extent of the lens and sensor is infinite. The film that images on the sensor inside the conventional camera is formulated as an integral of the radiance coming through the lens under simplifications:
\begin{equation}
g_{F}(x_1,x_2) = \frac{1}{F^2} \int_{\mathbb{R}^2} f_F(x_1,x_2,u_1,u_2) \dif u_1 u_2,
\label{2.1}
\end{equation}
where $g_{F}$ denotes the film at a distance $F$ from the lens.

\begin{figure}[htb]
\centering
\begin{tikzpicture}[scale=1.5]
\draw[->][line width =1pt]  (-2,-0) -- (-2,2) node[above] {$u_2$};
\draw[->][line width =1pt]  (-2,-0) -- (-0.267,-1) node[right]{$u_1$};
\draw[->][line width =1pt]  (1,0) -- (1,2)node[above] {$x_2$};
\draw[->][line width =1pt]  (1,0) -- (2.73,-1)node[right] {$x_1$};
\draw[->][line width =1pt]  (-2.5,1.7) -- (3,0.6);
\draw[->][line width =1pt]  (-0.8,0.3) -- (-2,0.3); 
\draw[->][line width =1pt]  (-0.2,0.3) -- (1,0.3); 
\node at(-0.5,0.3) {$F$};
\node at(3.1,0.4) {$f_F(x_1,x_2,u_1,u_2)$};
\coordinate[label=90:${(u_1,u_2)}$] (A) at (-1.4,1.48);
\coordinate[label=90:${(x_1,x_2)}$] (B) at (1.6,0.88);
\filldraw (A) circle (.03)
(B) circle (.03);
\node at(-1.5,-1.35) {Lens plane};
\node at(1.5,-1.35) {Sensor plane};
\end{tikzpicture}
\caption{Two-plane parameterization of the light field $f_F(x_1,x_2,u_1,u_2)$.}
\label{figure2}
\end{figure}
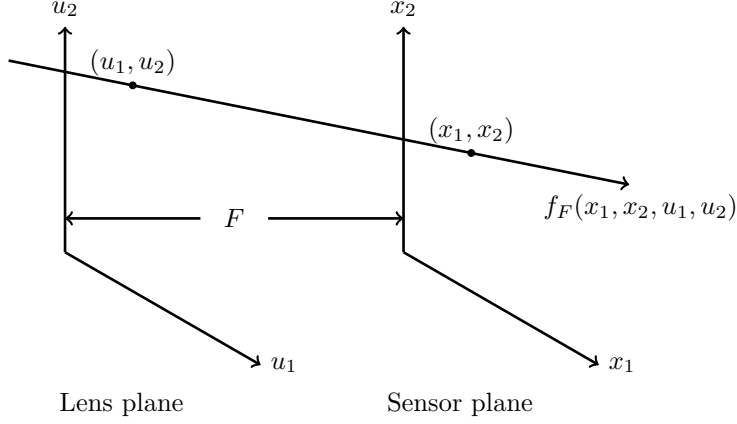

Now, we consider how to get the focal stack $g_{F^{\prime}}$ at another distance $F^{\prime}$ on the focal plane (see Figure \ref{figure1}) instead of integrating the light field  $f_{F^{\prime}}$ again like in (\ref{2.1}) directly (in this time, the sensor plane at depth $F$ is called the reference plane). In fact, we can reparameterize $f_{F^{\prime}}$ by $f_{F}$ (2-dimensional case see Figure \ref{figure3}), i.e.,
$$
f_{F^{\prime}}(\bar{x}_1,\bar{x}_2,u_1,u_2) = f_{F}\left(\frac{F}{F^{\prime}} \bar{x}_1 + \left(1-\frac{F}{F^{\prime}}\right)u_1,\frac{F}{F^{\prime}} \bar{x}_2 + \left(1-\frac{F}{F^{\prime}}\right)u_2,u_1,u_2 \right).
$$
Combining with the (\ref{2.1}), we get a useful approach (\ref{1.1}) to calculate the focal stack $g_{F^{\prime}}$ only by $f_{F}$.

\begin{figure}[htb]
\centering
\begin{tikzpicture}[scale=4]
\draw[->][line width =1pt]  (-1,-0.3) -- (-1,1) ;
\draw[->][line width =1pt]  (-0.4,-0.3) -- (-0.4,1);
\draw[->][line width =1pt]  (1,-0.3) -- (1,1);
\draw[->][line width =1pt] (-0.8,-0.1) -- (-1,-0.1);
\draw[->][line width =1pt] (-0.6,-0.1) -- (-0.4,-0.1);
\draw[->][line width =1pt] (-0.1,-0.2) -- (-1,-0.2);
\draw[->][line width =1pt] (0.1,-0.2) -- (1,-0.2);
\draw[line width =1pt] (-1.1,-0.04)--(1.1,0.84);
\draw[densely dashed][line width =1pt] (-1,0)--(1.16,0);
\node at(-0.7,-0.1) {$F$};
\node at(0,-0.2) {$F^{\prime}$};
\coordinate[label=160:$u_1$] (A) at (-1,0);
\coordinate[label=100:$x_1$] (B) at (-0.4,0.24);
\coordinate[label=100:$\bar{x}_1$] (C) at (1,0.8);
\filldraw (A) circle (.02)
(B) circle (.02)
(C) circle (.02);
\draw[densely dashed][line width =1pt] (B) -- (-0.25,0.24);
\draw[densely dashed][line width =1pt] (C) -- (1.15,0.8);
\draw [<->][line width =1pt] (-0.31,0)--(-0.31,0.24);
\draw [->][line width =1pt] (1.1,0.3)--(1.1,0);
\draw [->][line width =1pt] (1.1,0.5)--(1.1,0.8);
\node at(1.2,0.4) {$\bar{x}_1-u_1$};
\node at(-0.13,0.12) {$x_1-u_1$};
\node at(0.4,0.8) {$\frac{\bar{x}_1-u_1}{x_1-u_1}=\frac{{F}^{\prime}}{F}$};
\node at(-1,-0.35) {Lens plane};
\node at(-0.3,-0.35) {Reference plane};
\node at(1,-0.35) {Focal plane};
\end{tikzpicture}
\caption{Reparameterization of $f_{F^{\prime}}(\bar{x}_1,u_1)$ by $f_{F}(x_1,u_1)$. The figure shows the simplified 2-dimensional case involving only $u_1$, $x_1$, and $\bar{x}_1$. By similar triangles, we have $x_1 = 
\frac{F}{F^{\prime}} \bar{x}_1 + \left(1-\frac{F}{F^{\prime}}\right)u_1$. Since the ray from $u_1$ to $x_1$ and the ray from $u_1$ to $\bar{x}_1$ are the same, there holds $f_{F^{\prime}}(\bar{x}_1,u_1)=f_F\left(\frac{F}{F^{\prime}} \bar{x}_1 + \left(1-\frac{F}{F^{\prime}}\right)u_1,u_1\right)$.}
\label{figure3}
\end{figure}
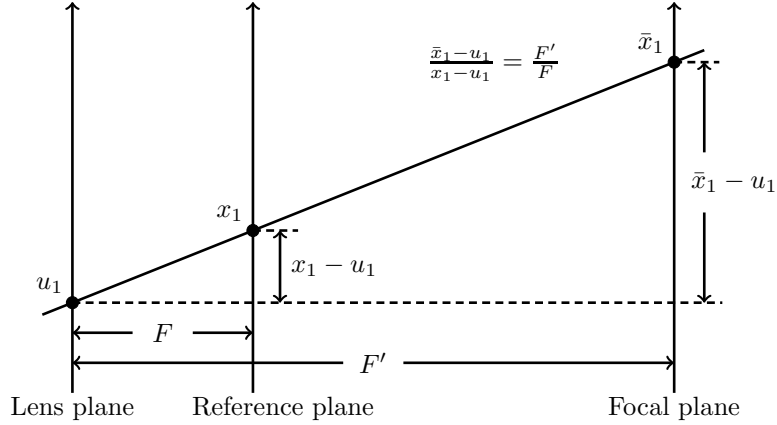

Let $\alpha = F^{\prime}/F$ for simplicity, then $F^{\prime}=\alpha F$. Without loss of generality, let $F=1$ and define $f\triangleq f_{1}$. Inspired by the above background, we put forward the  $n$-dimensional photography transform as follows.

\begin{defi}
Let $f(\boldsymbol{x},\boldsymbol{u})\in\mathcal{S}$$(\mathbb{R}^{2n})$, the Schwartz space. For $\alpha \in (\mathbb{R}-\{0\})$ and $\bar{\boldsymbol{x}} = (\bar{x}_1,\cdots,\bar{x}_n)^{\top}\in \mathbb{R}^n$, we define the photography transform $P$ of a function $f$ as
\begin{equation}
Pf(\alpha,\bar{\boldsymbol{x}}) = |\alpha|^{-n} \int_{\mathbb{R}^n} f \left(\frac{1}{\alpha}\bar{\boldsymbol{x}} + \left(1-\frac{1}{\alpha} \right)\boldsymbol{u},\boldsymbol{u}\right) \dif \boldsymbol{u},\label{2.2}
\end{equation}
where $\boldsymbol{x}= (x_1,\cdots,x_n)^{\top}\in \mathbb{R}^n$ and $\boldsymbol{u}= (u_1,\cdots,u_n)^{\top}\in \mathbb{R}^n$.
\end{defi}

\begin{remark} \leavevmode 
\rm{%\par %next line
(i) Obviously, $Pf$ is a function defined on $(\mathbb{R}-\{0\})\times \mathbb{R}^n$ of $\mathbb{R}^{n+1}$ and the photography transform $P$ is a linear operator which maps a function on $\mathbb{R}^{2n}$ into a function on $\mathbb{R}^{n+1}$. We also write 
\begin{equation}
P_{\alpha}f(\bar{\boldsymbol{x}}) = Pf(\alpha,\bar{\boldsymbol{x}}) \label{2.3}.
\end{equation}
If $f\in \mathcal{S}$$(\mathbb{R}^{2n})$, then $Pf$ and $P_{\alpha}f$ are in the Schwartz space on $(\mathbb{R}-\{0\})\times\mathbb{R}^{n}$ and $\mathbb{R}^{n}$ respectively. In particular, $P_{\alpha}f$ is called the focal stack for $n=2$.

(ii) To get $Pf$ from any known $f$ is a linear forward problem. Meanwhile, we also care about the ill-posed inverse problem, i.e., how to reconstruct the function $f$ from given measurements $Pf$. When $n=2$, the forward problem and the inverse problem are called refocusing \cite{Ng2} and the light field reconstruction from the focal stack \cite{LC,PF}, respectively. 

(iii) In practice, we mainly focus on cases when $n=1$ and $n=2$. Explicitly, for $f(x_1,u_1)\in \mathcal{S}$$(\mathbb{R}^2)$ $Pf$ is defined as 
\begin{equation}
Pf(\alpha,\bar{x}_1) = |\alpha|^{-1} \int_{\mathbb{R}} f\left(\frac{1}{\alpha}\bar{x}_1+ \left(1-\frac{1}{\alpha}\right)u_1,u_1\right) \dif u_1 \label{2.4}
\end{equation}
when $n=1$ and for $f(x_1,x_2,u_1,u_2)\in \mathcal{S}$$(\mathbb{R}^4)$ $Pf$ is defined as
\begin{align}
Pf(\alpha,\bar{x}_1,\bar{x}_2) 
&= |\alpha|^{-2}  \notag \\
&\ \times \int_{\mathbb{R}^2} f\left(\frac{1}{\alpha}\bar{x}_1+ \left(1-\frac{1}{\alpha}\right)u_1,\frac{1}{\alpha}\bar{x}_2 + \left(1-\frac{1}{\alpha}\right)u_2,u_1,u_2\right) \dif u_1 u_2 \label{2.5}
\end{align}
when $n=2$. Note that (\ref{1.1}) and (\ref{2.5}) coincide except for the notation of variables. Thus, the photography transform $P$ can be seen as a high-dimensional extension for the forward operator behind the forward problem of the light field reconstruction problem.   
}
\end{remark}

\section{\bf Properties of the photography transform}\label{se:property}

In this section, we mainly study the properties of both the photography transform defined in the above section and its dual operator.

\subsection{Relation of the photography transform with the Radon transform}

To clarify the relationship between the photography transform and the Radon transform, we need to define a new special transform, that is, the coupled Radon transform.

\begin{defi}
Let $f(\boldsymbol{x},\boldsymbol{u}) \in \mathcal{S}$$(\mathbb{R}^{2n})$. For $\alpha \in \mathbb{R}$ and $\bar{\boldsymbol{x}} \in \mathbb{R}^{n}$, the coupled Radon transform of $f$ is defined by 
\begin{equation}
R_c f((\alpha,1-\alpha),\bar{\boldsymbol{x}}) = \int_{\mathbb{R}^{n}}\int_{\mathbb{R}^{n}} f(\boldsymbol{x},\boldsymbol{u}) \delta (\alpha \boldsymbol{x}+(1-\alpha) \boldsymbol{u} - \bar{\boldsymbol{x}})\dif \boldsymbol{x} \dif \boldsymbol{u}, \label{3.1}
\end{equation}
where $\delta$ represents the $\delta$-function. 
\end{defi}

\begin{remark}\label{re2}{\rm
Since the multiple dimensional delta function $\delta(\boldsymbol{x})=\delta(x_{1})\times\cdots\times\delta(x_{n})$ corresponds to $\prod_{i=1}^{n}\delta(x_{i})$ \cite{GM}, the $n$-dimensional $\delta$-function in the integral (\ref{3.1}) can be expressed as the product of $n$ 1-dimensional $\delta$-functions.
}
\end{remark}

We recall two types of the classic Radon transforms $R$ and $R_1$ \cite{Ra} defined by
$$
Rf(\boldsymbol{\theta},p) = \int_{\boldsymbol{\theta}\cdot (\boldsymbol{x}, \boldsymbol{u})=p}f(\boldsymbol{x},\boldsymbol{u}) \dif s, 
$$
for $\boldsymbol{\theta}\in S^{2n-1}$, the unite sphere of $\mathbb{R}^{2n}$ and $p\in \mathbb{R}$, and
$$
R_1 f(\boldsymbol{\alpha},p) = \int_{\mathbb{R}^{n}}\int_{\mathbb{R}^{n}} f(\boldsymbol{x},\boldsymbol{u})\delta(\boldsymbol{\alpha}\cdot (\boldsymbol{x},\boldsymbol{u}) - p)\dif \boldsymbol{x} \dif \boldsymbol{u}
$$
for $\boldsymbol{\alpha}\in \mathbb{R}^{2n}$, $p\in \mathbb{R}$. It is noted that there are two differences between the definitions of the coupled Radon transform $R_c$ and classic Radon transform $R_1$. Firstly, the integrand in the integral of $R_1 f$ contains only one 1-dimensional $\delta$-function, while that of $R_cf$ contains the product of $n$ 1-dimensional $\delta$-functions (see Remark \ref{re2}). Moreover, the variables $(\boldsymbol{x},\boldsymbol{u})$ in the integral of $R_1 f$ are multiplied by the distinct elements in $\boldsymbol{\alpha}$ independently. But for any pair of variables $(x_i,u_i)$ from $(\boldsymbol{x},\boldsymbol{u})$ in the integral of $R_c f$, $x_1,\ldots,x_n$ are coupled by multiplying the same $\alpha$, and $u_1,\ldots,u_n$ are multiplied by the same $(1-\alpha)$. Therefore $R_c$ is called the coupled Radon transform due to the coupling relation between variables. 

Then we can establish the equivalent relation between the photography transform $P$ and the coupled Radon transform $R_c$ by the following theorem.

\begin{theorem}
If $f(\boldsymbol{x},\boldsymbol{u}) \in \mathcal{S}$$(\mathbb{R}^{2n})$, then for any $\alpha \in (\mathbb{R}-\{0\})$ and $\bar{\boldsymbol{x}}\in \mathbb{R}^n$
\begin{equation}
Pf(\alpha,\bar{\boldsymbol{x}}) = R_c f((\alpha,1-\alpha),\bar{\boldsymbol{x}}). \label{th3.2}
\end{equation}    
\end{theorem}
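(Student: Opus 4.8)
The plan is to start from the right-hand side of \eqref{th3.2} and reduce the double integral defining $R_c f$ to the single integral defining $Pf$ by performing the $\bix$-integration against the delta functions. First I would invoke Remark~\ref{re2} to write the $n$-dimensional delta as $\delta(\alpha\bix+(1-\alpha)\biu-\bar\bix)=\prod_{i=1}^n\delta(\alpha x_i+(1-\alpha)u_i-\bar x_i)$, so that the integral factors coordinatewise. Then, for each fixed $\biu$, I would use the elementary scaling identity $\delta(\alpha t - c)=|\alpha|^{-1}\delta(t - c/\alpha)$ (valid precisely because $\alpha\neq 0$, which is where the hypothesis $\alpha\in\mathbb{R}-\{0\}$ enters), applied with $t=x_i$ and $c=\bar x_i-(1-\alpha)u_i$. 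This rewrites the $i$-th factor as $|\alpha|^{-1}\delta\!\left(x_i-\frac{1}{\alpha}\bar x_i-\frac{\alpha-1}{\alpha}u_i\right)=|\alpha|^{-1}\delta\!\left(x_i-\frac{1}{\alpha}\bar x_i-\left(1-\frac{1}{\alpha}\right)u_i\right)$, and the product over $i=1,\dots,n$ contributes the prefactor $|\alpha|^{-n}$.

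Next I would carry out the inner integration $\int_{\mathbb{R}^n}f(\bix,\biu)\prod_i|\alpha|^{-1}\delta(x_i-\phi_i(\bar\bix,\biu))\,\dif\bix$ with $\phi_i(\bar\bix,\biu)=\frac{1}{\alpha}\bar x_i+(1-\frac{1}{\alpha})u_i$, using the sifting property of the delta function in each coordinate; since $f\in\mathcal{S}(\mathbb{R}^{2n})$ is continuous and rapidly decreasing, this evaluation is rigorous and yields $|\alpha|^{-n}f\!\left(\frac{1}{\alpha}\bar\bix+(1-\frac{1}{\alpha})\biu,\biu\right)$. Substituting this back leaves $\int_{\mathbb{R}^n}|\alpha|^{-n}f\!\left(\frac{1}{\alpha}\bar\bix+(1-\frac{1}{\alpha})\biu,\biu\right)\dif\biu$, which is exactly \eqref{2.2}, i.e.\ $Pf(\alpha,\bar\bix)$. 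Pulling the constant $|\alpha|^{-n}$ outside the remaining $\biu$-integral completes the identification.

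Honestly, this argument is essentially a bookkeeping exercise and there is no deep obstacle; the only point requiring a modicum of care is the justification of the delta-function manipulations at the level of rigor the paper wants. If the authors treat $R_c$ as a genuine tempered-distribution pairing rather than a formal integral, the "hard part" is simply to confirm that the composition of delta distributions with the affine change of variables $\bix\mapsto \alpha\bix+(1-\alpha)\biu-\bar\bix$ (at fixed $\biu$) is the legitimate operation it appears to be — which it is, because the map is an invertible affine bijection of $\mathbb{R}^n$ with Jacobian $\alpha^n\neq 0$, so the standard pullback formula for distributions applies. One should also note that $Pf$ is a priori only defined for $\alpha\neq 0$ while $R_c f$ is written for all $\alpha\in\mathbb{R}$; the theorem is asserted and proved only on the common domain $\alpha\in\mathbb{R}-\{0\}$, so no continuity-at-$\alpha=0$ issue arises. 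I would present the chain of equalities in a single short display, remarking after the fact that the coupling structure of $R_c$ (same $\alpha$ multiplying every $x_i$, same $1-\alpha$ multiplying every $u_i$) is exactly what makes the fibers of the integration affine subspaces of the special shape appearing in the photography transform.
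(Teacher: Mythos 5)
Your proposal is correct and takes essentially the same route as the paper: the paper's proof performs the affine substitution $\bix=\alpha\bix'+(1-\alpha)\biu'$, which produces the Jacobian factor $|\alpha|^{-n}$ and reduces the delta to $\delta(\bix-\bar{\bix})$ before sifting --- exactly your coordinatewise delta-scaling-plus-sifting argument written in vector form. No changes are needed.
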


\begin{proof}
By variables substitutions $\boldsymbol{x}=\alpha \boldsymbol{x}^{\prime}+(1-\alpha) \boldsymbol{u}^{\prime}$ and $\boldsymbol{u}= \boldsymbol{u}^{\prime}$, for any $\alpha \in (\mathbb{R}-\{0\})$ we have
\begin{equation*}
\begin{split}
    &\int_{\mathbb{R}^{n}}\int_{\mathbb{R}^{n}} f(\boldsymbol{x}^{\prime},\boldsymbol{u}^{\prime}) \delta (\alpha \boldsymbol{x}^{\prime}+(1-\alpha) \boldsymbol{u}^{\prime} - \bar{\boldsymbol{x}})\dif \boldsymbol{x}^{\prime} \dif \boldsymbol{u}^{\prime}  \\
={}& |\alpha|^{-n}\int_{\mathbb{R}^{n}}\int_{\mathbb{R}^{n}} f\left(\frac{1}{\alpha}\boldsymbol{x}+\left(1-\frac{1}{\alpha} \right) \boldsymbol{u},\boldsymbol{u}\right) \delta (\boldsymbol{x}- \bar{\boldsymbol{x}} )\dif \boldsymbol{x} \dif \boldsymbol{u}  \\
={}& |\alpha|^{-n} \int_{\mathbb{R}^n} f \left(\frac{1}{\alpha}\bar{\boldsymbol{x}} + \left(1-\frac{1}{\alpha} \right)\boldsymbol{u},\boldsymbol{u}\right) \dif \boldsymbol{u},
\end{split}
\end{equation*}
which completes the proof.
\end{proof}

By the coupled Radon transform, the range of the photography transform $P$ can be extended to $\mathcal{S}$$(\mathbb{R}\times \mathbb{R}^{n})$ and then in the following we assume that $Pf$ is defined on $\mathbb{R}\times \mathbb{R}^{n}$.

For $n=1$, there is a stronger equivalent relation between the photography transform and the classic Radon transform.

\begin{cor}\label{cor3.3}
For $n=1$ we have 
\begin{align*}
Pf(\alpha,\bar{x}_1) 
&=  R_1 f((\alpha,1-\alpha),\bar{x}_1)\\
&= \frac{1}{\sqrt{\alpha^2+(1-\alpha)^2}} Rf \left( \frac{(\alpha,1-\alpha)}{\| (\alpha,1-\alpha)\|}, \frac{\bar{x}_1}{\| (\alpha,1-\alpha)\|} \right).    
\end{align*}
\end{cor}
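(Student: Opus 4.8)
The plan is to prove the two equalities in turn: the first is an immediate specialization of the equivalence $Pf(\alpha,\bar{\boldsymbol{x}})=R_cf((\alpha,1-\alpha),\bar{\boldsymbol{x}})$ proved above to the case $n=1$, and the second is the standard homogeneity identity relating the non‑normalized Radon transform $R_1$ (with direction $\boldsymbol{\alpha}\in\R^{2n}$) to the normalized Radon transform $R$ (with unit direction $\boldsymbol{\theta}\in S^{2n-1}$).

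First I would record that for $n=1$ one has $R_cf((\alpha,1-\alpha),\bar{x}_1)=R_1f((\alpha,1-\alpha),\bar{x}_1)$. Indeed, by the equivalence theorem above, $Pf(\alpha,\bar{x}_1)=R_cf((\alpha,1-\alpha),\bar{x}_1)$; and comparing the defining integrals of $R_c$ and $R_1$ in dimension $n=1$, the only two structural differences noted in the text — that $R_c$ carries a product of $n$ one‑dimensional $\delta$‑functions while $R_1$ carries a single one, and that the scalars $\alpha,1-\alpha$ in $R_c$ are ``coupled'' to several variables — both disappear: by Remark~\ref{re2} the product degenerates to one $\delta$‑function, and coupling a single scalar to a single variable is vacuous. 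Hence the two integrals literally coincide, giving the first equality.

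For the second equality, set $\boldsymbol{\alpha}=(\alpha,1-\alpha)$ and $c=\|\boldsymbol{\alpha}\|=\sqrt{\alpha^2+(1-\alpha)^2}\neq 0$, and write $\boldsymbol{\theta}=\boldsymbol{\alpha}/c\in S^{1}$. Using the scaling rule $\delta(ct)=|c|^{-1}\delta(t)$ inside the integral defining $R_1f$, I would obtain
\[
\delta\bigl(\boldsymbol{\alpha}\cdot(x_1,u_1)-\bar{x}_1\bigr)=\tfrac1c\,\delta\!\Bigl(\boldsymbol{\theta}\cdot(x_1,u_1)-\tfrac{\bar{x}_1}{c}\Bigr),
\]
so that $R_1f(\boldsymbol{\alpha},\bar{x}_1)=c^{-1}R_1f\bigl(\boldsymbol{\theta},\bar{x}_1/c\bigr)$. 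It then remains to identify $R_1f(\boldsymbol{\theta},p)$ with $Rf(\boldsymbol{\theta},p)$ for a \emph{unit} vector $\boldsymbol{\theta}$: the distributional pairing $\int_{\R}\int_{\R} f(x_1,u_1)\,\delta(\boldsymbol{\theta}\cdot(x_1,u_1)-p)\,\dif x_1\dif u_1$ equals the line integral $\int_{\boldsymbol{\theta}\cdot(x_1,u_1)=p} f\,\dif s$, since the linear map $(x_1,u_1)\mapsto\boldsymbol{\theta}\cdot(x_1,u_1)$ has gradient of norm $\|\boldsymbol{\theta}\|=1$, so the co‑area formula introduces no Jacobian. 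Substituting $p=\bar{x}_1/c$ yields exactly the asserted formula.

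The only genuinely non‑bookkeeping step is this last identification $R_1f(\boldsymbol{\theta},\cdot)=Rf(\boldsymbol{\theta},\cdot)$ for $\boldsymbol{\theta}\in S^{1}$ — that integrating $f$ against the one‑dimensional $\delta$ supported on a line reproduces its arc‑length integral over that line. This is classical (it is essentially how the $\delta$‑function form of the Radon transform is derived), and it is precisely here that $\|\boldsymbol{\theta}\|=1$ matters; everything else in the corollary is the homogeneity of $\delta$ together with the $n=1$ specialization of the already‑established equivalence $P=R_c$.
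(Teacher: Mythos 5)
Your proof is correct and follows essentially the same route as the paper: the first equality is the $n=1$ specialization of the equivalence $Pf=R_cf$ (with $R_c$ reducing to $R_1$ when $n=1$), and the second is the degree $-1$ homogeneity of $R_1f$. The only difference is that the paper simply cites this homogeneity from the literature, whereas you derive it from the scaling rule $\delta(ct)=|c|^{-1}\delta(t)$ and justify the identification of $R_1f(\boldsymbol{\theta},\cdot)$ with $Rf(\boldsymbol{\theta},\cdot)$ for unit $\boldsymbol{\theta}$ via the coarea formula, making the argument self-contained.
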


\begin{proof}
The first equality is a direct corollary of (\ref{th3.2}) by letting $n=1$, and the second equality is guaranteed by the fact that $R_1 f$ is a homogeneous function of degree -1 \cite{Ra}.
\end{proof}

\subsection{Fourier slice theorem and convolution theorem}
Similar to the classic Radon transform, many important properties of the photography transform are related to the Fourier transform and convolution. In this paper, whenever the Fourier transform or convolution of functions on $\mathbb{R}\times \mathbb{R}^{n}$ is applied it is to be taken with respect to the second variable, i.e., for any $h,g \in \mathcal{S}$$(\mathbb{R}\times \mathbb{R}^{n})$, 
\begin{gather}
h*g(\alpha,\bar{\boldsymbol{x}}) = \int_{\mathbb{R}^n} h(\alpha,\bar{\boldsymbol{x}}-\bar{\boldsymbol{x}}^{\prime}) g(\alpha,\bar{\boldsymbol{x}}^{\prime}) \dif \bar{\boldsymbol{x}}^{\prime},  \label{convolution}\\
\hat{h}(\alpha,\xi_{\bar{\boldsymbol{x}}}) = \int_{\mathbb{R}^n} e^{-2 \pi i \bar{\boldsymbol{x}} \cdot  \xi_{\bar{\boldsymbol{x}}}} h(\alpha,\bar{\boldsymbol{x}}) \dif \bar{\boldsymbol{x}}, \label{Fourier}
\end{gather}
where $\xi_{\bar{\boldsymbol{x}}}=(\xi_{\bar{x}_1},\ldots,\xi_{\bar{x}_n})^{\top}\in \mathbb{R}^{n}$.

Here we have two properties of the photography transform $P$ related to the Fourier transform and convolution.

\begin{theorem}\label{FandC}
For $f(\boldsymbol{x},\boldsymbol{u}),\ g(\boldsymbol{x},\boldsymbol{u}) \in \mathcal{S}$$(\mathbb{R}^{2n})$, we have
\newline{\rm (}i{\rm )}
\begin{equation}
(P_{\alpha}f) \, \hat{}\, (\xi_{\bar{\boldsymbol{x}}}) = \hat{f}(\alpha \xi_{\bar{\boldsymbol{x}}},(1-\alpha) \xi_{\bar{\boldsymbol{x}}}), \quad \alpha \in \mathbb{R}. \label{thm3.4.1}
\end{equation}
{\rm (}ii{\rm )}
\begin{equation}
P_{\alpha}(f*g) = P_{\alpha}f * P_{\alpha}g, \quad \alpha \in \mathbb{R}. \label{thm3.4.2}
\end{equation}
\end{theorem}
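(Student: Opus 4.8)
The plan is to establish (i) by a direct computation and then obtain (ii) from (i) together with the classical convolution theorem for the Fourier transform on $\mathbb{R}^{2n}$. Throughout, all interchanges of the order of integration will be justified by the rapid decay of Schwartz functions, and the observation noted after the definition, that $P_{\alpha}f\in\mathcal{S}(\mathbb{R}^{n})$ whenever $f\in\mathcal{S}(\mathbb{R}^{2n})$, is what makes the Fourier-analytic manipulations in (ii) legitimate.

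For (i), first suppose $\alpha\neq 0$. Combining the definition (\ref{2.2}) with (\ref{Fourier}), I would write $(P_{\alpha}f)\,\hat{}\,(\xi_{\bar{\boldsymbol{x}}})$ as the double integral over $\bar{\boldsymbol{x}}$ and $\boldsymbol{u}$ of $|\alpha|^{-n}e^{-2\pi i\bar{\boldsymbol{x}}\cdot\xi_{\bar{\boldsymbol{x}}}}f\bigl(\frac{1}{\alpha}\bar{\boldsymbol{x}}+(1-\frac{1}{\alpha})\boldsymbol{u},\boldsymbol{u}\bigr)$ and, for each fixed $\boldsymbol{u}$, perform the linear change of variables $\boldsymbol{x}=\frac{1}{\alpha}\bar{\boldsymbol{x}}+(1-\frac{1}{\alpha})\boldsymbol{u}$, equivalently $\bar{\boldsymbol{x}}=\alpha\boldsymbol{x}+(1-\alpha)\boldsymbol{u}$. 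Its Jacobian contributes a factor $|\alpha|^{n}$ which cancels the prefactor $|\alpha|^{-n}$, and the exponent becomes $-2\pi i(\alpha\boldsymbol{x}+(1-\alpha)\boldsymbol{u})\cdot\xi_{\bar{\boldsymbol{x}}}=-2\pi i\bigl(\boldsymbol{x}\cdot(\alpha\xi_{\bar{\boldsymbol{x}}})+\boldsymbol{u}\cdot((1-\alpha)\xi_{\bar{\boldsymbol{x}}})\bigr)$. After Fubini, the iterated integral collapses to the $2n$-dimensional Fourier transform of $f$ evaluated at $(\alpha\xi_{\bar{\boldsymbol{x}}},(1-\alpha)\xi_{\bar{\boldsymbol{x}}})$, which is exactly (\ref{thm3.4.1}). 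The excluded value $\alpha=0$, for which this substitution degenerates, is handled separately: either by using the equivalence (\ref{th3.2}) together with (\ref{3.1}) to obtain $P_{0}f(\bar{\boldsymbol{x}})=\int_{\mathbb{R}^{n}}f(\boldsymbol{x},\bar{\boldsymbol{x}})\dif\boldsymbol{x}$, whose Fourier transform in $\bar{\boldsymbol{x}}$ is $\hat{f}(0,\xi_{\bar{\boldsymbol{x}}})$, or simply by continuity of both sides of (\ref{thm3.4.1}) in $\alpha$.

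For (ii), I would argue on the Fourier side. Applying the $n$-dimensional Fourier transform in $\bar{\boldsymbol{x}}$ to $P_{\alpha}(f*g)$ and using (i) with $f$ replaced by the $\mathbb{R}^{2n}$-convolution $f*g$ gives $(P_{\alpha}(f*g))\,\hat{}\,(\xi_{\bar{\boldsymbol{x}}})=\widehat{f*g}(\alpha\xi_{\bar{\boldsymbol{x}}},(1-\alpha)\xi_{\bar{\boldsymbol{x}}})$. The classical convolution theorem on $\mathbb{R}^{2n}$ turns this into $\hat{f}(\alpha\xi_{\bar{\boldsymbol{x}}},(1-\alpha)\xi_{\bar{\boldsymbol{x}}})\,\hat{g}(\alpha\xi_{\bar{\boldsymbol{x}}},(1-\alpha)\xi_{\bar{\boldsymbol{x}}})$, and a second application of (i) identifies this with $(P_{\alpha}f)\,\hat{}\,(\xi_{\bar{\boldsymbol{x}}})\,(P_{\alpha}g)\,\hat{}\,(\xi_{\bar{\boldsymbol{x}}})$. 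By the $n$-dimensional convolution theorem applied in the variable $\bar{\boldsymbol{x}}$, valid because $P_{\alpha}f,P_{\alpha}g\in\mathcal{S}(\mathbb{R}^{n})$ so that (\ref{convolution}) is well defined and behaves well under (\ref{Fourier}), the last product equals $(P_{\alpha}f*P_{\alpha}g)\,\hat{}\,(\xi_{\bar{\boldsymbol{x}}})$. Since $P_{\alpha}(f*g)$ and $P_{\alpha}f*P_{\alpha}g$ are Schwartz functions with the same Fourier transform, Fourier inversion yields (\ref{thm3.4.2}).

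The underlying computations are essentially bookkeeping, and I expect the only delicate points to be: keeping track of the Jacobian in (i) so that the normalizing factor $|\alpha|^{-n}$ cancels exactly; treating the degenerate direction $\alpha=0$, where the change of variables is not invertible; and confirming that every function on which a Fourier transform or a convolution is performed lies in the appropriate Schwartz space, so that Fubini's theorem and Fourier inversion may be invoked. One could alternatively prove (ii) by substituting the definition of the $\mathbb{R}^{2n}$-convolution directly into (\ref{2.2}) and chasing the change of variables by hand, but that route is more cumbersome, so the Fourier-transform argument above is preferable.
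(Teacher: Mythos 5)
Your proposal is correct and follows essentially the same route as the paper: part (i) by the change of variables $\bar{\boldsymbol{x}}=\alpha\boldsymbol{x}+(1-\alpha)\boldsymbol{u}$ whose Jacobian cancels the factor $|\alpha|^{-n}$, and part (ii) by applying (i) twice together with the classical identity $(f*g)\,\hat{}\,=\hat{f}\hat{g}$ and Fourier inversion. Your explicit treatment of the degenerate case $\alpha=0$ via the coupled Radon transform is a small point of extra care that the paper's proof leaves implicit.
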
 

\begin{proof}
Using the Fourier transform defined by (\ref{Fourier}), we get
\begin{equation*}
\begin{split}
(P_{\alpha}f) \,\hat{}\,  (\xi_{\bar{\boldsymbol{x}}}) 
&= \int_{\mathbb{R}^n} e^{-2 \pi i \bar{\boldsymbol{x}} \cdot  \xi_{\bar{\boldsymbol{x}}}} P_{\alpha}f(\alpha,\bar{\boldsymbol{x}}) \dif \bar{\boldsymbol{x}} \\
&= |\alpha|^{-n} \int_{\mathbb{R}^n} e^{-2 \pi i \bar{\boldsymbol{x}} \cdot  \xi_{\bar{\boldsymbol{x}}}} \int_{\mathbb{R}^n} f \left( \frac{1}{\alpha} \bar{\boldsymbol{x}} + \left(1-\frac{1}{\alpha} \right) \bar{\boldsymbol{u}},\bar{\boldsymbol{u}} \right) \dif \bar{\boldsymbol{u}}  \dif  \bar{\boldsymbol{x}}. 
\end{split}
\end{equation*}
Making the substitutions $\boldsymbol{u}=\bar{\boldsymbol{u}}$ and $\boldsymbol{x}=\frac{1}{\alpha} \bar{\boldsymbol{x}} + \left(1-\frac{1}{\alpha} \right) \bar{\boldsymbol{u}}$ we obtain
\begin{equation*}
\begin{split}
(P_{\alpha}f)\,\hat{}\, (\xi_{\bar{\boldsymbol{x}}}) 
&= \int_{\mathbb{R}^n} \int_{\mathbb{R}^n} e^{-2 \pi i (\alpha \boldsymbol{x}+ (1-\alpha)\boldsymbol{u}) \cdot  \xi_{\bar{\boldsymbol{x}}}} f(\boldsymbol{x},\boldsymbol{u}) \dif \boldsymbol{x} \dif\boldsymbol{u}   \\
&= \hat{f}(\alpha \xi_{\bar{\boldsymbol{x}}},(1-\alpha) \xi_{\bar{\boldsymbol{x}}}). 
\end{split}
\end{equation*}
This proves the first formula. 

By (\ref{thm3.4.1}) and an equality $(f*g)\,\hat{}  = \hat{f}\hat{g}$,
\begin{equation*}
\begin{split}
(P_{\alpha}(f*g))\,\hat{}\, (\xi_{\bar{\boldsymbol{x}}}) &= (f*g)\,\hat{}\, (\alpha \xi_{\bar{\boldsymbol{x}}},(1-\alpha) \xi_{\bar{\boldsymbol{x}}}) \\
&= (\hat{f}\hat{g})(\alpha \xi_{\bar{\boldsymbol{x}}},(1-\alpha) \xi_{\bar{\boldsymbol{x}}}) \\
&= \hat{f}(\alpha \xi_{\bar{\boldsymbol{x}}},(1-\alpha) \xi_{\bar{\boldsymbol{x}}}) \ \hat{g}(\alpha \xi_{\bar{\boldsymbol{x}}},(1-\alpha) \xi_{\bar{\boldsymbol{x}}})  \\
&= (P_{\alpha}(f))\,\hat{}\, (\xi_{\bar{\boldsymbol{x}}}) \ (P_{\alpha}(g))\,\hat{}\,  (\xi_{\bar{\boldsymbol{x}}}).
\end{split}
\end{equation*}
The second formula follows immediately by applying inverse Fourier transform to both sides of the above equality.
\end{proof} 

\begin{remark}\leavevmode\label{R3.5}
\rm{%\par %next line
(i) Theorem \ref{FandC} extends the results in \cite{Ng2} just for $n=2$ to the arbitrary $n$-dimensional case and is proven more directly than the proof by a so-called generalized Fourier slice theorem used in \cite{Ng2}. According to the definition (\ref{convolution}) and (\ref{Fourier}) at the beginning of this subsection, Theorem  \ref{FandC} is also true if we substitute $P_{\alpha}$ by $P$. This result shows that the photography transform $P$ retains or partially inherits those properties of the classic Radon transform about the convolution and Fourier transform in \cite{Na}. 

(ii) Theorem \ref{FandC} (i) is also called the projection theorem or Fourier slice theorem of the photography transform $P$. This theorem states that taking the $n$-dimensional Fourier transform to $P_{\alpha}f$ is equivalent to picking out the value of $\hat{f}(\xi_{\boldsymbol{x}},\xi_{\boldsymbol{u}})$ on the 1-dimensional collection of $n$-dimensional subspaces
\begin{equation}
\bigcup_{\alpha \in \mathbb{R}}\left\{ (\xi_{\boldsymbol{x}},\xi_{\boldsymbol{u}})\in \mathbb{R}^{2n}:  \xi_{\boldsymbol{x}}= \alpha\xi_{\bar{\boldsymbol{x}}},\ \xi_{\boldsymbol{u}}= (1-\alpha) \xi_{\bar{\boldsymbol{x}}},\  \xi_{\bar{\boldsymbol{x}}}\in \mathbb{R}^{n}\right\} \label{remark3.6}.
\end{equation}
For $n=1$, it is noted that the whole region in the frequency domain of $\hat{f}$ can be covered by taking the Fourier transform to $Pf$. But in general, we can not do the same thing when $n\geq 2$ since $(P_{\alpha}f) \, \hat{}$ is only known on the 1-dimensional collection of $n$-dimensional subspaces (\ref{remark3.6}) instead of the entire frequency domain. (\ref{thm3.4.1}) can be used as a basis of the Fourier reconstruction method, however, one will face the difficulty of serious insufficient information in the frequency domain for $n\geq 2$.

(iii) Theorem \ref{FandC} (ii) is the convolution theorem of the photography transform $P$. This theorem can be intuitively understood as a fact that if the real light field is given by convolving an ideal light field $f$ with a function $g$, then the film $P_{\alpha}(f*g)$ obtained from the real light field $f*g$ is equivalent to the convolution of the film $P_{\alpha}f$ obtained from the ideal light field $f$ with the function $P_{\alpha}g$. Assuming that $g$ is known, we can either simulate the film in reality from the given ideal film or recover the ideal film from the given real film.
}
\end{remark}

\subsection{dual operator}
In the last subsection, we will discuss the dual operator of the photography transform and its related properties. To define dual operators $P_{\alpha}^{*}$, $P^{*}$, we begin with 
\begin{align}
\int_{\mathbb{R}^n} P_{\alpha}f(\bar{\boldsymbol{x}})g(\bar{\boldsymbol{x}})\dif \bar{\boldsymbol{x}} 
&=|\alpha|^{-n} \int_{\mathbb{R}^n}\int_{\mathbb{R}^n}f \left( \frac{1}{\alpha} \bar{\boldsymbol{x}} + \left(1-\frac{1}{\alpha} \right) \bar{\boldsymbol{u}},\bar{\boldsymbol{u}} \right)  g(\bar{\boldsymbol{x}}) \dif \bar{\boldsymbol{u}} \dif \bar{\boldsymbol{x}} \nonumber \\
&= \int_{\mathbb{R}^n}\int_{\mathbb{R}^n} f(\boldsymbol{x},\boldsymbol{u})g(\alpha \boldsymbol{x}+(1-\alpha)\boldsymbol{u}) \dif \boldsymbol{x} \dif \boldsymbol{u}. \label{3.3.1}
\end{align}
Defining 
$$
(P^{*}_{\alpha}g)(\boldsymbol{x},\boldsymbol{u}) = g(\alpha \boldsymbol{x}+(1-\alpha)\boldsymbol{u}).
$$
and combining with (\ref{3.3.1}) we have 
$$
\int_{\mathbb{R}^n} P_{\alpha}f(\bar{\boldsymbol{x}})g(\bar{\boldsymbol{x}})\dif \bar{\boldsymbol{x}} = \int_{\mathbb{R}^n}\int_{\mathbb{R}^n} f(\boldsymbol{x},\boldsymbol{u}) (P^{*}_{\alpha}g)(\boldsymbol{x},\boldsymbol{u}) \dif \boldsymbol{x} \dif \boldsymbol{u}.
$$
Integrating over $\mathbb{R}$ we get

$$
\int_{\mathbb{R}}\int_{\mathbb{R}^n} Pf(\alpha, \bar{\boldsymbol{x}})g(\alpha, \bar{\boldsymbol{x}})\dif \bar{\boldsymbol{x}} \dif \alpha =  \int_{\mathbb{R}^n}\int_{\mathbb{R}^n} f(\boldsymbol{x},\boldsymbol{u}) P^{*}g(\boldsymbol{x},\boldsymbol{u}) \dif \boldsymbol{x} \dif \boldsymbol{u}.
$$
Thus
\begin{equation}
P^{*}g(\boldsymbol{x},\boldsymbol{u}) = \int_{\mathbb{R}} g(\alpha,\alpha \boldsymbol{x}+(1-\alpha)\boldsymbol{u})\dif \alpha. \label{3.3.3}
\end{equation}
Note that $P$ and $P^{*}$ form a dual pair in the sense of integral geometry: while $P$ integrates overall points in the linear manifold 
$$
\mathcal{M}=\{(\boldsymbol{x},\boldsymbol{u})\in\mathbb{R}^{2n}:\alpha \boldsymbol{x}+(1-\alpha) \boldsymbol{u} - \bar{\boldsymbol{x}}=0\}, 
$$
$P^{*}$ integrates over all linear manifolds 
$$
\mathcal{M}_{\alpha}=\left\{(\boldsymbol{x}^{\prime},\boldsymbol{u}^{\prime})\in\mathbb{R}^{2n}:\alpha \boldsymbol{x}^{\prime}+(1-\alpha) \boldsymbol{u}^{\prime}=\alpha \boldsymbol{x}+(1-\alpha) \boldsymbol{u} \right\}
$$
with respect to $\alpha$ through the point $(\boldsymbol{x},\boldsymbol{u})$. 

The following property combines the dual operator $P^{*}$ with the convolution.

\begin{theorem}\label{dual}
For $f(\boldsymbol{x},\boldsymbol{u}) \in \mathcal{S}$$(\mathbb{R}^{2n})$ and $g(\alpha,\bar{\boldsymbol{x}}) \in \mathcal{S}$$(\mathbb{R}^{n+1})$, we have
\begin{equation}
(P^{*}g)*f = P^{*}(g*Pf). \label{thm3.6}
\end{equation}
\end{theorem}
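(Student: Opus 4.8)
The plan is to verify (\ref{thm3.6}) pointwise on $\mathbb{R}^{2n}$ by unwinding every definition and reorganizing the resulting iterated integral with Fubini's theorem, mirroring the classical identity $(R^{*}g)*f=R^{*}(g*Rf)$ for the Radon transform \cite{Na}. Throughout, the convolution $(P^{*}g)*f$ on the left is the ordinary convolution on $\mathbb{R}^{2n}$, while $g*Pf$ on the right is the convolution in the second variable defined in (\ref{convolution}), and $P^{*}$ is given by (\ref{3.3.3}).

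First I would expand the left-hand side. Writing $\boldsymbol{w}=(\boldsymbol{y},\boldsymbol{v})$ for the integration variable on $\mathbb{R}^{2n}$ and substituting (\ref{3.3.3}) for $P^{*}g$,
\begin{align*}
((P^{*}g)*f)(\boldsymbol{x},\boldsymbol{u})
&=\int_{\mathbb{R}^{n}}\int_{\mathbb{R}^{n}}(P^{*}g)(\boldsymbol{x}-\boldsymbol{y},\boldsymbol{u}-\boldsymbol{v})\,f(\boldsymbol{y},\boldsymbol{v})\dif\boldsymbol{y}\dif\boldsymbol{v}\\
&=\int_{\mathbb{R}}\left(\int_{\mathbb{R}^{n}}\int_{\mathbb{R}^{n}}g\big(\alpha,\ \alpha(\boldsymbol{x}-\boldsymbol{y})+(1-\alpha)(\boldsymbol{u}-\boldsymbol{v})\big)\,f(\boldsymbol{y},\boldsymbol{v})\dif\boldsymbol{y}\dif\boldsymbol{v}\right)\dif\alpha,
\end{align*}
the interchange of the $\alpha$-integral with the $(\boldsymbol{y},\boldsymbol{v})$-integral being licensed by the Schwartz decay of $g$ (in all $n+1$ variables) and of $f$.

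Next, fix $\alpha\neq 0$ (the single value $\alpha=0$ contributes a set of measure zero to the outer integral and can be ignored), and set $\bar{\boldsymbol{x}}:=\alpha\boldsymbol{x}+(1-\alpha)\boldsymbol{u}$. I would identify the bracketed inner integral with $(g(\alpha,\cdot)*P_{\alpha}f)(\bar{\boldsymbol{x}})$. Starting from $\int_{\mathbb{R}^{n}}g(\alpha,\bar{\boldsymbol{x}}-\bar{\boldsymbol{x}}')\,P_{\alpha}f(\bar{\boldsymbol{x}}')\dif\bar{\boldsymbol{x}}'$, insert the definition (\ref{2.2}) of $P_{\alpha}f$ and, for each fixed $\boldsymbol{v}$, change variables $\boldsymbol{y}=\frac{1}{\alpha}\bar{\boldsymbol{x}}'+\left(1-\frac{1}{\alpha}\right)\boldsymbol{v}$ in the innermost integral; the Jacobian $|\alpha|^{n}$ cancels the prefactor $|\alpha|^{-n}$, and one is left with $\int_{\mathbb{R}^{n}}\int_{\mathbb{R}^{n}}g\big(\alpha,\bar{\boldsymbol{x}}-\alpha\boldsymbol{y}-(1-\alpha)\boldsymbol{v}\big)f(\boldsymbol{y},\boldsymbol{v})\dif\boldsymbol{y}\dif\boldsymbol{v}$, which, after replacing $\bar{\boldsymbol{x}}$ by $\alpha\boldsymbol{x}+(1-\alpha)\boldsymbol{u}$, is exactly the bracketed integrand above. (Equivalently, one may use the equivalence (\ref{th3.2}) to write $P_{\alpha}f$ via the coupled Radon transform and integrate out the $\delta$-function.) Since $(g(\alpha,\cdot)*P_{\alpha}f)(\bar{\boldsymbol{x}})=(g*Pf)(\alpha,\bar{\boldsymbol{x}})$ by (\ref{convolution}), reassembling gives
\[
((P^{*}g)*f)(\boldsymbol{x},\boldsymbol{u})=\int_{\mathbb{R}}(g*Pf)\big(\alpha,\ \alpha\boldsymbol{x}+(1-\alpha)\boldsymbol{u}\big)\dif\alpha=P^{*}(g*Pf)(\boldsymbol{x},\boldsymbol{u}),
\]
where the last equality is (\ref{3.3.3}) applied to the Schwartz function $g*Pf\in\mathcal{S}(\mathbb{R}\times\mathbb{R}^{n})$ (using that $Pf$ is Schwartz, per the remark following (\ref{2.3})).

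The main obstacle is not conceptual but one of rigor: one must confirm that every object in sight ($P^{*}g$, $Pf$, $g*Pf$, and $P^{*}(g*Pf)$) is well defined, and that the repeated applications of Fubini's theorem and the change of variables performed inside an iterated integral over the unbounded $\alpha$-line are all legitimate. Because $f$ and $g$ are Schwartz — so in particular $g$ decays rapidly in $\alpha$ as well as in $\bar{\boldsymbol{x}}$ — the required absolute-integrability estimates are routine, but they are the only point demanding genuine attention.
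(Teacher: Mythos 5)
Your proposal is correct and follows essentially the same route as the paper's proof: both unwind $P^{*}$ via (\ref{3.3.3}), insert the definition of $Pf$, and use the change of variables $\boldsymbol{y}=\frac{1}{\alpha}\bar{\boldsymbol{x}}'+\left(1-\frac{1}{\alpha}\right)\boldsymbol{v}$ together with Fubini to match the two sides. The only cosmetic difference is that you transform the left side into the right side in one pass, whereas the paper expands both sides separately to a common iterated integral.
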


\begin{proof}
By (\ref{3.3.3}), we have 
\begin{align}
& ((P^{*}g)*f)(\boldsymbol{x},\boldsymbol{u}) \notag\\
{}=& \int_{\mathbb{R}^n}\int_{\mathbb{R}^n} P^{*}g(\boldsymbol{x}-\boldsymbol{x}^{\prime},\boldsymbol{u}-\boldsymbol{u}^{\prime}) f(\boldsymbol{x}^{\prime},\boldsymbol{u}^{\prime}) \dif \boldsymbol{x}^{\prime}\dif \boldsymbol{u}^{\prime} \nonumber \\
{}=&\int_{\mathbb{R}^n}\int_{\mathbb{R}^n}\int_{\mathbb{R}} g(\alpha, \alpha(\boldsymbol{x}-\boldsymbol{x}^{\prime})+(1-\alpha)(\boldsymbol{u}-\boldsymbol{u}^{\prime}))f(\boldsymbol{x}^{\prime},\boldsymbol{u}^{\prime}) \dif \alpha \dif \boldsymbol{x}^{\prime}\dif \boldsymbol{u}^{\prime} \label{3.3.4}.
\end{align}
which is the left side of (\ref{thm3.6}). Using (\ref{convolution}), (\ref{3.3.3}) and (\ref{2.2}) for the right side of (\ref{thm3.6}) we obtain
\begin{align*}
&(P^{*}(g*Pf))(\boldsymbol{x},\boldsymbol{u})\\ 
&= \int_{\mathbb{R}} (g*Pf)(\alpha,\alpha \boldsymbol{x}+(1-\alpha)\boldsymbol{u}) \dif \alpha\\
&= \int_{\mathbb{R}} \int_{\mathbb{R}^n} g(\alpha,\alpha \boldsymbol{x}+(1-\alpha)\boldsymbol{u}-\bar{\boldsymbol{x}}) Pf(\alpha,\bar{\boldsymbol{x}}) \dif\bar{\boldsymbol{x}} \dif \alpha \\
&= |\alpha|^{-n} \int_{\mathbb{R}} \int_{\mathbb{R}^n}\int_{\mathbb{R}^n}g(\alpha,\alpha \boldsymbol{x}+(1-\alpha)\boldsymbol{u}-\bar{\boldsymbol{x}}) f \left(\frac{1}{\alpha}\bar{\boldsymbol{x}} + \left(1-\frac{1}{\alpha} \right)\bar{\boldsymbol{u}},\bar{\boldsymbol{u}}\right) \dif \bar{\boldsymbol{u}} \dif \bar{\boldsymbol{x}} \dif \alpha.
\end{align*}
Changing variables by letting $\boldsymbol{u}^{\prime}=\bar{\boldsymbol{u}}$ and $\boldsymbol{x}^{\prime}=\frac{1}{\alpha} \bar{\boldsymbol{x}} + \left(1-\frac{1}{\alpha} \right) \bar{\boldsymbol{u}}$, we get
\begin{equation}
(P^{*}(g*Pf))(\boldsymbol{x},\boldsymbol{u}) = \int_{\mathbb{R}} \int_{\mathbb{R}^n}\int_{\mathbb{R}^n}g(\alpha, \alpha(\boldsymbol{x}-\boldsymbol{x}^{\prime})+(1-\alpha)(\boldsymbol{u}-\boldsymbol{u}^{\prime}))f(\boldsymbol{x}^{\prime},\boldsymbol{u}^{\prime})  \dif \boldsymbol{x}^{\prime}\dif \boldsymbol{u}^{\prime} \dif \alpha \label{3.3.5}.
\end{equation}
Then the proof is completed by comparing (\ref{3.3.4}) with (\ref{3.3.5}).
\end{proof}

\begin{remark}
\rm{ Theorem \ref{dual} implies that $P^{*}$ preserves the convolution property of the dual operator for the classic Radon transform in \cite{Na}.
}
\end{remark}

It can be found from the above theorems that some properties of the photography transform $P$ are consistent with or close to those of the classic Radon transform. However, the following theorem shows us an intrinsic difference between two transforms. Then we conclude this section with the convolution representation of the normal operator $P^{*}P$ and related useful discussion.

\begin{theorem}\label{normal}
Let $f(\boldsymbol{x},\boldsymbol{u}) \in \mathcal{S}$$(\mathbb{R}^{2n})$. Then for any $j=1,\ldots,n$, we have

\begin{equation}
P^{*}Pf = \frac{|x_j - u_j|^{n-2} \prod_{i=1,i\neq j}^{n}\delta \big(\sin(\phi_j - \phi_i)\big)}{\| (x_j,u_j)\|^{n-1} \prod_{i=1,i\neq j}^{n}\|(x_i,u_i) \|} *f  \label{thm3.8},
\end{equation}
where for any finite sequence $a_i$, $\prod_{i=1,i\neq j}^{n} a_i$ is taken as $1$ when $n=1$, and $\phi_i={\rm arg}(x_i,u_i)$, the principal argument of $(x_i,u_i)$, i.e., $(x_i,u_i)=r(\cos(\phi_i),\sin(\phi_i))^{\top}$, $\phi_i \in [0,2\pi)$, $r\in [0,\infty)$. 
\end{theorem}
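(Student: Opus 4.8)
The plan is to compute $P^{*}Pf$ directly from the $\delta$-function representations of $P$ and $P^{*}$, recognise it as a convolution on $\mathbb{R}^{2n}$, and then evaluate the convolution kernel by collapsing the $\alpha$-integral against one of the $n$ coordinate factors of the multidimensional $\delta$-function.

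First I would combine the equivalence $Pf = R_{c}f$ in (\ref{th3.2}) with the expression (\ref{3.3.3}) for $P^{*}$ to get
\begin{equation*}
P^{*}Pf(\boldsymbol{x},\boldsymbol{u}) = \int_{\mathbb{R}}\int_{\mathbb{R}^{n}}\int_{\mathbb{R}^{n}} f(\boldsymbol{x}^{\prime},\boldsymbol{u}^{\prime})\, \delta\big(\alpha(\boldsymbol{x}^{\prime}-\boldsymbol{x}) + (1-\alpha)(\boldsymbol{u}^{\prime}-\boldsymbol{u})\big) \dif\boldsymbol{x}^{\prime}\dif\boldsymbol{u}^{\prime}\dif\alpha .
\end{equation*}
The substitution $\boldsymbol{y}=\boldsymbol{x}-\boldsymbol{x}^{\prime}$, $\boldsymbol{v}=\boldsymbol{u}-\boldsymbol{u}^{\prime}$ (Jacobian $1$) together with the evenness of $\delta$ then presents this as the convolution $P^{*}Pf = K*f$ on $\mathbb{R}^{2n}$, where by Remark \ref{re2}
\begin{equation*}
K(\boldsymbol{y},\boldsymbol{v}) = \int_{\mathbb{R}} \delta\big(\alpha\boldsymbol{y} + (1-\alpha)\boldsymbol{v}\big)\dif\alpha = \int_{\mathbb{R}}\prod_{i=1}^{n}\delta\big(\alpha y_{i} + (1-\alpha)v_{i}\big)\dif\alpha .
\end{equation*}

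Next I would fix $j\in\{1,\dots,n\}$ and use the $j$-th factor $\delta(\alpha(y_{j}-v_{j})+v_{j}) = |y_{j}-v_{j}|^{-1}\delta(\alpha-\alpha_{j})$ with $\alpha_{j} = -v_{j}/(y_{j}-v_{j})$ to carry out the $\alpha$-integration. Substituting $\alpha=\alpha_{j}$ into the remaining factors, noting $\alpha_{j}y_{i}+(1-\alpha_{j})v_{i} = (y_{j}v_{i}-v_{j}y_{i})/(y_{j}-v_{j})$, and applying the scaling rule $\delta(ct)=|c|^{-1}\delta(t)$ to pull $|y_{j}-v_{j}|$ out of each of the $n-1$ surviving $\delta$'s yields
\begin{equation*}
K(\boldsymbol{y},\boldsymbol{v}) = |y_{j}-v_{j}|^{\,n-2}\prod_{i=1,i\neq j}^{n}\delta(y_{j}v_{i}-v_{j}y_{i}) .
\end{equation*}
Finally, writing $(y_{i},v_{i}) = r_{i}(\cos\phi_{i},\sin\phi_{i})$ with $r_{i}=\|(y_{i},v_{i})\|$, the identity $y_{j}v_{i}-v_{j}y_{i} = -r_{i}r_{j}\sin(\phi_{j}-\phi_{i})$ and one more application of the scaling rule give $\delta(y_{j}v_{i}-v_{j}y_{i}) = \delta(\sin(\phi_{j}-\phi_{i}))/(r_{i}r_{j})$; collecting the $n-1$ copies of $r_{j}$ and relabeling $(\boldsymbol{y},\boldsymbol{v})\mapsto(\boldsymbol{x},\boldsymbol{u})$ reproduces the kernel in (\ref{thm3.8}) exactly. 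The arbitrariness of $j$ is nothing but the freedom to choose which coordinate factor resolves the $\alpha$-integral, so every $j$ gives a legitimate representation.

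I expect the main difficulty to be making the $\delta$-function manipulations rigorous rather than merely formal: collapsing $\int_{\mathbb{R}}\delta(\alpha(y_{j}-v_{j})+v_{j})(\cdots)\dif\alpha$ presupposes $y_{j}\neq v_{j}$, and the later scaling identities apply $\delta$ to functions of several variables. The clean remedy is to pass to polar coordinates in all $n$ pairs $(y_{i},v_{i})$ at the outset, so that $\dif\boldsymbol{y}\,\dif\boldsymbol{v} = \prod_{i=1}^{n} r_{i}\,\dif r_{i}\,\dif\phi_{i}$ and the $\delta$'s act only on the angular variables $\phi_{i}$, with the radii $r_{i}$ entering as genuine parameters; each step above then becomes an instance of the one-variable rules $\delta(at+b)=|a|^{-1}\delta(t+b/a)$ and $\delta(cg(\phi))=|c|^{-1}\delta(g(\phi))$, while the exceptional locus $\{y_{j}=v_{j}\}$ is a null set that contributes nothing once the identity is tested against a Schwartz function. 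I would therefore carry the computation out weakly, i.e. after pairing with an arbitrary test function in $\mathcal{S}(\mathbb{R}^{2n})$, so that every manipulation is justified by Fubini's theorem and the classical substitution rules for distributions.
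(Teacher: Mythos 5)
Your proposal is correct and arrives at exactly the kernel in (\ref{thm3.8}), but the central computation goes by a genuinely different route than the paper's. Both proofs start from the same identity $P^{*}Pf=\big(\int_{\mathbb{R}}\delta(\alpha\boldsymbol{y}+(1-\alpha)\boldsymbol{v})\,\dif\alpha\big)*f$, but the paper evaluates the $\alpha$-integral by normalizing each $\delta$-factor, substituting $\tan\theta=1-\tfrac{1}{\alpha}$, exploiting $2\pi$-periodicity, shifting by $\phi_j$, and finally collapsing $\delta(\cos\theta)$ via $t=\cos\theta$ — a fairly long chain of angular manipulations. You instead collapse the $\alpha$-integral immediately against the $j$-th factor $\delta(\alpha(y_j-v_j)+v_j)$, which lands directly on the algebraic form $|y_j-v_j|^{n-2}\prod_{i\neq j}\delta(y_jv_i-v_jy_i)$ (this is precisely the equivalent expression the paper only records afterwards in the remark following the theorem), and you convert to the angular form $\delta(\sin(\phi_j-\phi_i))/(r_ir_j)$ only at the end. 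Your route is shorter, makes the role of the arbitrary index $j$ transparent (it is just the factor chosen to resolve $\alpha$), and the exceptional locus you flag ($y_j=v_j$, really $y_j=v_j=0$) is the same degeneracy implicit in the paper's division by $\|(x_i-x_i',u_i-u_i')\|$; your suggestion to pass to polar coordinates and work weakly against a test function addresses this at least as carefully as the paper does. The one point worth stating explicitly if you write this up is that the scaling rule $\delta(cg)=|c|^{-1}\delta(g)$ is being applied with $c$ a nonvanishing function of the remaining variables rather than a constant, which is legitimate on the zero set of $g$ but is a composition-of-distributions identity rather than the elementary one-variable rule.
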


\begin{proof}
By (\ref{3.3.3}) and the equivalent definition of photography transform (\ref{3.1}), we have
\begin{align*}
& P^{*}Pf(\boldsymbol{x},\boldsymbol{u}) \\
&= \int_{\mathbb{R}} Pf(\alpha,\alpha \boldsymbol{x}+(1-\alpha)\boldsymbol{u}) \dif\alpha \\
&=\int_{\mathbb{R}}\int_{\mathbb{R}^{n}}\int_{\mathbb{R}^{n}}f(
\boldsymbol{x}^{\prime},\boldsymbol{u}^{\prime}) \delta(\alpha(\boldsymbol{x}^{\prime}-\boldsymbol{x})+(1-\alpha)(\boldsymbol{u}^{\prime}-\boldsymbol{u}))\dif \boldsymbol{x}^{\prime}\dif \boldsymbol{u}^{\prime} \dif \alpha  \\
&= \int_{\mathbb{R}^{n}}\int_{\mathbb{R}^{n}} \frac{f(\boldsymbol{x}^{\prime},\boldsymbol{u}^{\prime})}{\prod_{i=1}^{n}\| (x_i-x^{\prime}_i,u_i-u^{\prime}_i) \|} 
\int_{\mathbb{R}} [\alpha^2+(1-\alpha)^2]^{-\frac{n}{2}}\prod_{i=1}^{n} \delta \bigg(\frac{\alpha}{\sqrt{\alpha^2+(1-\alpha)^2}} \\
&\quad \times  \frac{x_i - x^{\prime}_i}{\|(x_i-x^{\prime}_i,u_i-u^{\prime}_i) \|}- \frac{\alpha - 1}{\sqrt{\alpha^2+(1-\alpha)^2}} \frac{u_i - u^{\prime}_i}{\|(x_i-x^{\prime}_i,u_i-u^{\prime}_i)\|} \bigg) \dif \alpha \dif \boldsymbol{x}^{\prime}\dif \boldsymbol{u}^{\prime}.
\end{align*}
For $i=1,\ldots,n$, denote
$$
\cos \phi_i = \frac{x_i - x^{\prime}_i}{\|(x_i-x^{\prime}_i,u_i-u^{\prime}_i)\|}, \quad
\sin \phi_i = \frac{u_i - u^{\prime}_i}{\|(x_i-x^{\prime}_i,u_i-u^{\prime}_i)\|}.
$$
Breaking the inner integral into the positive and negative parts and making the substitution $\tan \theta = 1 - \frac{1}{\alpha}$ ($ \dif \alpha= (\cos \theta - \sin \theta)^{-2}\dif \theta $), we have
\begin{align*}
&\int_{0}^{+\infty} + \int_{-\infty}^{0} [\alpha^2+(1-\alpha)^2]^{-\frac{n}{2}} \prod_{i=1}^{n} \delta \bigg(\frac{\alpha}{\sqrt{\alpha^2+(1-\alpha)^2}} \cos \phi_i \\
&\quad - \frac{\alpha - 1}{\sqrt{\alpha^2+(1-\alpha)^2}} \sin \phi_i \bigg) \dif \alpha \\
{}=& \int_{-\frac{\pi}{2}}^{\frac{\pi}{4}} + \int_{\frac{\pi}{4}}^{\frac{\pi}{2}} | \cos \theta - \sin \theta |^{n-2} \prod_{i=1}^{n} \delta (\cos\theta \cos \phi_i - \sin \theta \sin \phi_i) \dif \theta \\
{}=& \int_{-\frac{\pi}{2}}^{\frac{\pi}{2}} | \cos \theta - \sin \theta |^{n-2} \prod_{i=1}^{n} \delta\big(\cos (\theta + \phi_i) \big)\dif \theta 
\end{align*}
by the following equalities
\begin{align*}
\alpha^2+(1-\alpha)^2]^{-\frac{1}{2}} &= | \cos \theta - \sin \theta |, \\
\frac{\alpha}{\sqrt{\alpha^2+(1-\alpha)^2}} &= \frac{|\cos \theta - \sin \theta|}{\cos \theta - \sin \theta} \cos \theta, \\
\frac{\alpha - 1}{\sqrt{\alpha^2+(1-\alpha)^2}} &= \frac{|\cos \theta - \sin \theta|}{\cos \theta - \sin \theta} \sin \theta.
\end{align*}
Note that
\begin{align*}
&\int_{\frac{\pi}{2}}^{\frac{3\pi}{2}} | \cos \theta - \sin \theta |^{n-2} \prod_{i=1}^{n} \delta\big(\cos (\theta + \phi_i) \big)\dif \theta \\
{}=& \int_{-\frac{\pi}{2}}^{\frac{\pi}{2}} | \cos \theta - \sin \theta |^{n-2} \prod_{i=1}^{n} \delta\big(\cos (\theta + \phi_i)\big)\dif \theta.    
\end{align*}
By 2$\pi$-periodicity of sine and cosine functions, for any $j=1,\ldots,n$ we get
\begin{align*}
&\int_{-\frac{\pi}{2}}^{\frac{\pi}{2}} | \cos \theta - \sin \theta |^{n-2} \prod_{i=1}^{n} \delta\big(\cos (\theta + \phi_i)\big)\dif \theta  \\
={}&\frac{1}{2} \int_{-\frac{\pi}{2}}^{\frac{3\pi}{2}} | \cos \theta - \sin \theta |^{n-2} \prod_{i=1}^{n} \delta\big(\cos (\theta + \phi_i)\big)\dif \theta \\
={}& \frac{1}{2} \int_{-\frac{\pi}{2}-\phi_j}^{\frac{3\pi}{2}-\phi_j} \big| \cos (\theta -\phi_j)- \sin (\theta - \phi_j) \big|^{n-2} \delta(\cos \theta ) \prod_{i=1,i \neq j}^{n} \delta\big(\cos (\theta + \phi_i)\big) \dif \theta \\
={}& \frac{1}{2} \int_{0}^{2\pi} \big| \cos (\theta -\phi_j)- \sin (\theta - \phi_j) \big|^{n-2} \delta(\cos \theta) \prod_{i=1,i \neq j}^{n} \delta\big(\cos (\theta + \phi_i - \phi_j)\big) \dif \theta\\
={}& \int_{0}^{\pi} \big|(\cos \phi_j + \sin \phi_j)\cos\theta  +  (\sin \phi_j - \cos \phi_j)\sin \theta \big|^{n-2} \delta(\cos \theta )\\
& \qquad \qquad \qquad \qquad \times \prod_{i=1,i \neq j}^{n} \delta\big( \cos(\phi_j - \phi_i)\cos \theta + \sin(\phi_j - \phi_i) \sin \theta \big)\dif \theta.
\end{align*}
Letting $t=\cos\theta$, we obtain
\begin{align*}
& \int_{0}^{\pi} \big|(\cos \phi_j + \sin \phi_j)\cos\theta  +  (\sin \phi_j - \cos \phi_j)\sin \theta \big|^{n-2} \delta(\cos \theta ) \\
& \qquad  \times \prod_{i=1,i \neq j}^{n} \delta\big( \cos(\phi_j - \phi_i)\cos \theta + \sin(\phi_j - \phi_i) \sin \theta \big)\dif \theta \\
={}&  \int_{-1}^{1} \big|(\cos \phi_j + \sin \phi_j)t +(\sin \phi_j - \cos \phi_j)(1-t^2)^{\frac{1}{2}} \big|^{n-2} \delta(t)  \\
& \qquad  \times \prod_{i=1,i \neq j}^{n} \delta\big( \cos(\phi_j - \phi_i)t + \sin(\phi_j - \phi_i) (1-t^2)^{\frac{1}{2}} \big)(1-t^2)^{-\frac{1}{2}} \dif t  \\ 
={}& |\sin \phi_j - \cos \phi_j|^{n-2} \prod_{i=1,i \neq j}^{n} \delta\big(\sin(\phi_j - \phi_i) \big). 
\end{align*}
Combining with the definition of $\phi_j$, we have
\begin{align*}
&P^{*}Pf(\boldsymbol{x},\boldsymbol{u})\\
&= \int_{\mathbb{R}^{n}}\int_{\mathbb{R}^{n}} \frac{\left|\sin \phi_j - \cos \phi_j \right|^{n-2}}{\prod_{i=1}^{n}\| (x_i-x^{\prime}_i,u_i-u^{\prime}_i) \|} f(\boldsymbol{x}^{\prime},\boldsymbol{u}^{\prime})  \prod_{i=1,i \neq j}^{n} \delta\big(\sin(\phi_j - \phi_i) \big)   \dif \boldsymbol{x}^{\prime}\dif \boldsymbol{u}^{\prime} \\
&= \int_{\mathbb{R}^n}\int_{\mathbb{R}^n}\frac{\left|(x_j-x^{\prime}_j)-(u_j-u^{\prime}_j)\right|^{n-2}\prod_{i=1,i \neq j}^{n} \delta\big(\sin(\phi_j - \phi_i) \big)}{\left\|(x_j-x^{\prime}_j,u_j-u^{\prime}_j) \right\|^{n-1}\prod_{i=1,i \neq j}^{n}\left\|(x_i-x^{\prime}_i,u_i-u^{\prime}_i) \right\|} f(\boldsymbol{x}^{\prime},\boldsymbol{u}^{\prime}) \dif \boldsymbol{x}^{\prime}\dif \boldsymbol{u}^{\prime}, 
\end{align*}
Hence, the result follows.
\end{proof}

\begin{remark}
\rm{ By checking the proof, we can get an equivalent form of (\ref{thm3.8}) only with respect to $(\boldsymbol{x},\boldsymbol{u})$. Since
$$
\delta\left(\sin(\phi_j - \phi_i) \right)=\frac{\delta\big( (u_j-u^{\prime}_j)(x_i-x^{\prime}_i)-(u_i-u^{\prime}_i)(x_j-x^{\prime}_j)\big)}{\|(x_i-x^{\prime}_i,u_i-u^{\prime}_i) \|\|(x_j-x^{\prime}_j,u_j-u^{\prime}_j) \|},
$$
then 
$$
P^{*}Pf = \left\{ |x_j - u_j|^{n-2} \prod_{i=1,i \neq j}^{n}\delta\big( u_j x_i-u_i x_j \big) \right\} *f.
$$
}
\end{remark}

\begin{cor}
For $f(\boldsymbol{x},\boldsymbol{u})\in\mathcal{S}$$(\mathbb{R}^{2n})$.\\
{\rm (}i{\rm )} For $n=1$, the formula (\ref{thm3.8}) reads as 
\begin{equation}
P^{*}Pf = |x_1-u_1|^{-1}*f.
\end{equation}
{\rm (}ii{\rm )} For $n=2$, the formula (\ref{thm3.8}) reads as
\begin{equation}
P^{*}Pf = \frac{\delta(\sin(\phi -\varphi))}{\|(x_1,u_1)\| \|(x_2,u_2)\|}*f ,
\end{equation}
where $\phi = {\rm arg}(x_1,u_1)$ and $\varphi = {\rm arg}(x_2,u_2)$.
\end{cor}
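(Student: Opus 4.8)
The plan is to obtain both identities by directly specializing the general convolution representation (\ref{thm3.8}) of the normal operator $P^{*}P$ proved in Theorem \ref{normal}, choosing the free index $j=1$ in each case and simplifying the resulting exponents and the (possibly empty) products over $i\neq j$.

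For part (i), I would put $n=1$ and $j=1$ in (\ref{thm3.8}). By the convention fixed in Theorem \ref{normal}, the products $\prod_{i=1,\,i\neq 1}^{1}$ occurring in both the numerator and the denominator of the kernel are empty and therefore equal to $1$, so the kernel reduces to $|x_1-u_1|^{n-2}/\|(x_1,u_1)\|^{n-1}$ evaluated at $n=1$. The exponent of $\|(x_1,u_1)\|$ is then $n-1=0$, leaving the kernel $|x_1-u_1|^{-1}$ and hence $P^{*}Pf=|x_1-u_1|^{-1}*f$.

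For part (ii), I would put $n=2$ and $j=1$ in (\ref{thm3.8}). Each product over $i\neq j$ now contains the single index $i=2$: the numerator contributes the factor $\delta\big(\sin(\phi_1-\phi_2)\big)$ and the denominator contributes $\|(x_2,u_2)\|$. The remaining numerator factor $|x_1-u_1|^{n-2}$ has exponent $n-2=0$, so it equals $1$ (off the Lebesgue-null set $x_1=u_1$, which is irrelevant to the convolution), while $\|(x_1,u_1)\|^{n-1}=\|(x_1,u_1)\|$. Renaming $\phi=\mathrm{arg}(x_1,u_1)$ and $\varphi=\mathrm{arg}(x_2,u_2)$ as in the statement gives the kernel $\delta(\sin(\phi-\varphi))/\big(\|(x_1,u_1)\|\,\|(x_2,u_2)\|\big)$, which is the asserted formula. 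One may equally take $j=2$: since $\delta$ is even and $\sin$ is odd, $\delta(\sin(\phi-\varphi))=\delta(\sin(\varphi-\phi))$, so the two choices produce the same kernel.

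There is essentially no obstacle here; the corollary is a bookkeeping specialization of Theorem \ref{normal}. The only point that deserves a word of care is the interpretation of the vanishing exponents $n-2$ and $n-1$ as producing the constant factor $1$ in the sense of tempered distributions, together with the remark that the two admissible choices of $j$ are consistent by the parity of $\delta$ and $\sin$.
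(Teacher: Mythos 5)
Your specialization is correct and is exactly how the paper obtains the corollary: it is a direct substitution of $n=1$ and $n=2$ (with the empty-product convention and the vanishing exponents handled as you describe) into formula (\ref{thm3.8}) of Theorem \ref{normal}, for which the paper offers no separate proof. Nothing further is needed.
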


\begin{remark}\leavevmode 
\rm{ For $f(\boldsymbol{x},\boldsymbol{u}) \in \mathcal{S}$$(\mathbb{R}^{2n})$, the normal operator of the classic Radon transform is represented by \cite{Na}
$$
R^{\#}Rf = |S^{2n-2}| \|(\boldsymbol{x},\boldsymbol{u})\|^{-1}*f,
$$
where $R^{\#}$ is the dual operator of $R$ and $|S^{m-1}|$ is the surface of the unit sphere $S^{m-1}$ in $\mathbb{R}^{m}$. Comparing this formula with the corresponding representation for the photography transform in Theorem \ref{normal}, we can obtain the following difference apart from the constant coefficient.

(i) For $n=1$, the convolution kernel in the expression of $P^{*}Pf$ is $|x_1-u_1|^{-1}$ instead of $\|(x_1,u_1)\|^{-1}$ in $R^{\#}Rf$. There is a noticeable difference between the two kernels. Note that
$$
\|(x_1,u_1)\|^{-1} \to 0, \quad {\rm as} \ x_1\to \infty \ {\rm and} \ u_1 \to \infty.
$$
Nevertheless, this is not true for $|x_1-u_1|^{-1}$ since letting $x_1=u_1+1$ we have
$$
|x_1-u_1|\equiv1 \nrightarrow 0,\quad {\rm as} \ x_1\to \infty \ {\rm and} \ u_1 \to \infty.
$$

(ii) For $n=2$, the difference between two convolution kernels is more significant. First of all, the convolution kernel in $P^{*}Pf$ includes $\delta$-function which depends on the angle variables $\phi$ and $\varphi$. In other words, the convolutional representation only becomes non-zero under the specific condition when $\phi=\varphi$ or $\phi=\varphi\pm \pi$. But this holds without any condition for the convolution representation of $R^{\#}Rf$. In addition, the norm part of the convolution kernel of $P^{*}Pf$ is $\|(x_1,u_1)\|^{-1} \|(x_2,u_2)\|^{-1}$ instead of $\|(x_1,x_2,u_1,u_2)\|^{-1}$ in $R^{\#}Rf$. In fact, by Cauchy-Schwarz inequality we have
$$
\|(x_1,u_1)\|^{-1} \|(x_2,u_2)\|^{-1} \le \|(x_1,x_2,u_1,u_2)\|^{-1}, \quad \forall \ x_1,x_2,u_1,u_2 \in \mathbb{R}.
$$
The above different features of the convolution kernel in $P^{*}Pf$ are all generated by the coupling relation between variables in the definition of the photography transform.
} 
\end{remark}

\section{\bf Inversion formula for the photography transform}\label{se:inversion}

An exact analytic inversion formula for the photography transform is not only of great importance for the development of inversion algorithms, especially for analytic reconstruction methods, but also plays a vital role in the study of the local dependence of the solution on the data. In this section, we derive the analytic inversion formulas for the $1$-dimensional photography transform without any assumption and for the $2$-dimensional photography transform with a certain assumption.

We first recall the Fourier slice theorem in the last section for $n=1$, that is,
\begin{equation}
(Pf) \,\hat{}\, (\alpha, \xi_{\bar{x}_1}) = \hat{f}(\alpha \xi_{\bar{x}_1},(1-\alpha) \xi_{\bar{x}_1}), \quad \alpha \in \mathbb{R}.\label{Pn1}
\end{equation}
According to our analysis in Remark \ref{R3.5}, all values of $\hat{f}(\xi_{x_1},\xi_{u_1})$ in the frequency domain can be uniquely determined by $(Pf) \,\hat{}\, (\alpha, \xi_{\bar{x}_1})$ which means $\hat{f}$ is known on $\mathbb{R}^2$ in this sense (see Figure \ref{n1}). This analysis can be used as a starting point for constructing the inversion formula for the $1$-dimensional photography transform. 

For $n=2$ we have 
\begin{equation}
(Pf) \,\hat{}\, (\alpha, \xi_{\bar{x}_1},\xi_{\bar{x}_2}) = \hat{f}(\alpha \xi_{\bar{x}_1},\alpha \xi_{\bar{x}_2},(1-\alpha) \xi_{\bar{x}_1},(1-\alpha) \xi_{\bar{x}_2}), \quad \alpha \in \mathbb{R}.\label{Pn2}
\end{equation}
Unfortunately, all values of $\hat{f}(\xi_{{x}_1},\xi_{{x}_2},\xi_{{u}_1},\xi_{{u}_2})$ in the frequency domain can not be determined by $(Pf) \,\hat{}\, (\alpha, \xi_{\bar{x}_1},\xi_{\bar{x}_2})$, since one only knows the information of $\hat{f}$ on the 1-dimensional collection of 2-dimensional subspaces
\begin{equation}
\bigcup_{\alpha \in \mathbb{R}}\left\{(\xi_{{x}_1},\xi_{{x}_2},\xi_{{u}_1},\xi_{{u}_2})\in\mathbb{R}^4:\xi_{{x}_i}=\alpha\xi_{\bar{x}_i}, \ \xi_{{u}_i}=(1-\alpha) \xi_{\bar{x}_i}, \ \xi_{\bar{x}_i}\in\mathbb{R},\ i=1,2\right\}, \label{4.1.1}
\end{equation}
which, in fact, is only a subset of the whole frequency domain. To overcome this difficulty so that the 2-dimensional photography transform can be inverted, we introduce the following assumption.

\begin{figure}[htbp]
	\centering
    \begin{subfigure}%{0.5\linewidth}
		\centering
		\begin{tikzpicture}[scale=1.14]
		\draw[->][line width =1.5pt]  (-2,1) -- (2,1);
		\draw[->][line width =1.5pt]  (-2,-1) -- (2,-1);
		\draw[->][line width =1.5pt]  (-2.4,0.7) -- (-2.4,-0.7);
		\draw[<-][line width =1.5pt]  (2.4,0.7) -- (2.4,-0.7);
		\node at(0,1.2) {\small $2$-D Fourier Transform};
		\node at(0,-0.8) {\small $1$-D Fourier Transform};
		\node at(-2.7,0) {$P$};
		\node at(2.4,1) {$\hat{f}$};
		\node at(-2.4,-1) {$Pf$};
		\node at(2.5,-1) {$(Pf)\, \hat{} \, {}$};
       \node at(-2.4,1) {$f$};
		\end{tikzpicture}
		\caption{The relationship between the Fourier transform and the photography transform $P$ when $n=1$.}
		\label{n1}
	\end{subfigure}
	\begin{subfigure}%{0.36\linewidth}
		\centering
		\begin{tikzpicture}[scale=1.14]
		\draw[->][line width =1.5pt]  (-2,1) -- (2,1);
		\draw[->][line width =1.5pt]  (-2,-1) -- (2,-1);
		\draw[->][line width =1.5pt]  (-2.4,0.7) -- (-2.4,-0.7);
		\draw[<-][line width =1.5pt]  (2.4,0.7) -- (2.4,-0.7);
		\node at(0,1.2) {\small $4$-D Fourier Transform};
		\node at(0,-0.8) {\small $2$-D Fourier Transform};
		\node at(-2.7,0) {$P$};
		\node at(2.4,1) {$\hat{f}$};
		\node at(-2.4,-1) {$Pf$};
		\node at(2.5,-1) {$(Pf)\, \hat{} \ {}$};
       \node at(-2.4,1) {$f$};
       \node at(1.3,0) {Assumption \ref{assum1}};
		\end{tikzpicture}
		\caption{The relationship between the Fourier transform and the photography transform $P$ when $n=2$.}
		\label{n2}
	\end{subfigure}
	%\caption{Fourier}
	\centering
\end{figure}

\begin{assumption}\label{assum1}
For $f(\boldsymbol{x},\boldsymbol{u}) \in \mathcal{S}$$(\mathbb{R}^{4})$, the Fourier transform of $f$ satisfies

\begin{equation}
\hat{f}( \xi_{x_1},\xi_{x_2}, s \xi_{x_1} + t  \xi_{x_2},s \xi_{x_2} - t \xi_{x_1}) 
=\hat{f}(\xi_{x_1},\xi_{x_2}, s \xi_{x_1} , s \xi_{x_2})\delta(t), \quad \forall \ \xi_{x_1},\xi_{x_2},s,t \in \mathbb{R}. \label{4.1.2}
\end{equation}
\end{assumption}

\begin{remark}{\rm
If $f$ satisfies the Assumption \ref{assum1}, its Fourier transform includes non-zero entries only on the 1-dimensional collection of 2-dimensional subspaces
\begin{equation}\label{4.1.3}
\bigcup_{s \in \mathbb{R}}\left\{(\xi_{{x}_1},\xi_{{x}_2},\xi_{{u}_1},\xi_{{u}_2})\in\mathbb{R}^4:\xi_{{u}_i}=s\xi_{x_i} ,i=1,2\right\}.
\end{equation}
It is noted that the light field in a Lambertian scene of slope $s$ \cite{LA,PF,TK} in which all rays from a point have the same radiance, satisfies this assumption.}
\end{remark}

Then, for $n=2$, under the above assumption, we can obtain all values of $\hat{f}$ on the whole frequency domain according to $(Pf) \,\hat{} \,$ (see Figure \ref {n2}), since (\ref{4.1.1}) coincides with (\ref{4.1.3}) by making appropriate variable substitutions. Thus, we can also establish the inversion formula for the 2-dimensional photography transform.

To proceed, we need to introduce the Riesz potential $I^{\beta}$ and the coupled Riesz potential $I^{\beta}_{c}$ by 
$$
(I^{\beta}f)\, \hat{} \, (\xi_{\boldsymbol{x}},\xi_{\boldsymbol{u}}) = \|(\xi_{\boldsymbol{x}},\xi_{\boldsymbol{u}}) \|^{-\beta} \hat{f}(\xi_{\boldsymbol{x}},\xi_{\boldsymbol{u}}), \quad \beta<2n,
$$
and
$$
(I^{\beta}_{c}f)\, \hat{} \, (\xi_{x_1},\xi_{x_2},\xi_{u_1},\xi_{u_2}) = \|(\xi_{x_1},\xi_{x_2})\|^{-\beta}\| (\xi_{u_1},\xi_{u_2})  \|^{-\beta} \hat{f}(\xi_{x_1},\xi_{x_2},\xi_{u_1},\xi_{u_2}),\quad \beta<2.
$$
If $I^{\beta}$ is applied to functions $g(\alpha,\bar{\boldsymbol{x}})$ on $\mathbb{R}\times\mathbb{R}^{n}$ it acts on the second variable. By the definition of $I^{\beta}$ and $I^{\beta}_{c}$, the following formulas 
\begin{gather*}
I^{-\beta}I^{\beta}f = f,\\
I^{-\beta}_{c}I^{\beta}_c f = f,
\end{gather*}
can be verified easily.

Now, we give the analytic inversion formulas for the $1$- and $2$-dimensional photography transform $P$, respectively.

\begin{theorem}
Let $f(\boldsymbol{x},\boldsymbol{u})\in\mathcal{S}$$(\mathbb{R}^{2n})$ and denote
\begin{equation*}
P^{*}_{\beta}g = \begin{cases}
P^{*}\Big(\big(\alpha^2+(1-\alpha)^2\big)^{-\frac{\beta}{2}}g \Big),  & \text{if} \ n=1,\\
P^{*}\big(|\alpha|^{2-\beta} |1-\alpha|^{-\beta} g\big), & \text{if}\ n=2.\\
\end{cases}
\end{equation*}
Then  
\\{\rm (}i{\rm )} for $n=1$ and any $\beta <2$, we have
\begin{equation}
f=I^{-\beta}P^{*}_{\beta}I^{\beta -1}g,\quad g=Pf;\label{Invn1}
\end{equation}
{\rm (}ii{\rm )} for $n=2$ and any $\beta <2$, under Assumption \ref{assum1}, we have
\begin{equation}
f = I^{-\beta}_{c} P^{*}_{\beta}I^{2(\beta -1)}g,\quad g=Pf.\label{Invn2}
\end{equation}
\end{theorem}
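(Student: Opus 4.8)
The plan is to establish both inversion formulas by working in the Fourier domain, using the Fourier slice theorem (Theorem \ref{FandC} (i)) to transfer the problem to a computation about $\hat{f}$, and then recognizing the resulting expression as the one produced by $P^{*}_{\beta}$ composed with the appropriate Riesz potential. For the case $n=1$ I would start from the identity $(Pf)\,\hat{}\,(\alpha,\xi_{\bar{x}_1}) = \hat{f}(\alpha\xi_{\bar{x}_1},(1-\alpha)\xi_{\bar{x}_1})$ in (\ref{Pn1}). The key observation is that as $\alpha$ ranges over $\mathbb{R}$ and $\xi_{\bar{x}_1}$ over $\mathbb{R}$, the pair $(\alpha\xi_{\bar{x}_1},(1-\alpha)\xi_{\bar{x}_1})$ sweeps out all of $\mathbb{R}^2$, so $\hat f$ is fully determined; the inversion is then a change-of-variables computation. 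Concretely I would write $f(\boldsymbol{x},\boldsymbol{u})$ via the 2-dimensional inverse Fourier transform, substitute $\xi_{x_1}=\alpha\xi_{\bar x_1}$, $\xi_{u_1}=(1-\alpha)\xi_{\bar x_1}$ (whose Jacobian is $|\xi_{\bar x_1}|$, producing the weight $(\alpha^2+(1-\alpha)^2)^{1/2}|\xi_{\bar x_1}|$ after accounting for $\|(\xi_{x_1},\xi_{u_1})\| = \sqrt{\alpha^2+(1-\alpha)^2}\,|\xi_{\bar x_1}|$), and match the exponential factor $e^{2\pi i(\alpha\xi_{\bar x_1}x_1 + (1-\alpha)\xi_{\bar x_1}u_1)} = e^{2\pi i \xi_{\bar x_1}(\alpha x_1 + (1-\alpha)u_1)}$, which is exactly the kernel of $P^{*}$ evaluated at the point $\alpha x_1 + (1-\alpha)u_1$. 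The powers of $\xi_{\bar x_1}$ left over should assemble into $I^{\beta-1}$ acting on $g$ on the $\bar x_1$-variable and $I^{-\beta}$ acting on $f$, with the residual factor $(\alpha^2+(1-\alpha)^2)^{-\beta/2}$ becoming precisely the multiplier defining $P^{*}_{\beta}$ in the $n=1$ branch. Introducing the free parameter $\beta<2$ is what lets the exponents be split so that both $I^{\beta-1}g$ is a tempered distribution and $I^{-\beta}$ is well-defined.

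For the case $n=2$ the strategy is the same but now one needs Assumption \ref{assum1} to make the Fourier information sufficient. Starting from (\ref{Pn2}), $(Pf)\,\hat{}\,(\alpha,\xi_{\bar x_1},\xi_{\bar x_2}) = \hat f(\alpha\xi_{\bar x_1},\alpha\xi_{\bar x_2},(1-\alpha)\xi_{\bar x_1},(1-\alpha)\xi_{\bar x_2})$, I would note that this only records $\hat f$ on the 4-parameter family (\ref{4.1.1}), but Assumption \ref{assum1} — rewritten in the form (\ref{4.1.3}) saying $\hat f$ is supported where $(\xi_{u_1},\xi_{u_2}) = s(\xi_{x_1},\xi_{x_2})$ — forces these two families to coincide after the substitution $\alpha/(1-\alpha) = $ (something in $s$), so no information is lost. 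Then I would again write $f$ by inverse Fourier transform, use the assumption to collapse the integral onto the slope-parametrized set, substitute so that the integration variables become $(\alpha,\xi_{\bar x_1},\xi_{\bar x_2})$, compute the Jacobian (now a power of $\|(\xi_{\bar x_1},\xi_{\bar x_2})\|$ of higher order, hence the $2(\beta-1)$ exponent on the $g$-side and the coupled potential $I^{-\beta}_c$ on the $f$-side), and identify the leftover $\alpha$-dependent scalar $|\alpha|^{2-\beta}|1-\alpha|^{-\beta}$ as exactly the weight in the $n=2$ branch of $P^{*}_{\beta}$. The factor $\|(\xi_{x_1},\xi_{x_2})\|^{-\beta}\|(\xi_{u_1},\xi_{u_2})\|^{-\beta}$ defining $I^{\beta}_c$ is what naturally appears because on the support set $\|(\xi_{u_1},\xi_{u_2})\| = |s|\,\|(\xi_{x_1},\xi_{x_2})\|$, and the two norm factors split between $\alpha$-powers and $\|(\xi_{\bar x_1},\xi_{\bar x_2})\|$-powers.

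The main obstacle I anticipate is the bookkeeping in the $n=2$ case: correctly using Assumption \ref{assum1} to pass from an integral over the full 4-dimensional frequency space to one over the 3-dimensional parameter set $(\alpha,\xi_{\bar x_1},\xi_{\bar x_2})$ while keeping track of the Jacobian and the delta-function normalization. One must be careful that (\ref{4.1.2}) is an identity of distributions (it contains $\delta(t)$), so the reduction has to be justified by testing against Schwartz functions rather than by naive substitution, and the change of variables $(\xi_{x_1},\xi_{x_2},\xi_{u_1},\xi_{u_2}) \leftrightarrow (\alpha,\xi_{\bar x_1},\xi_{\bar x_2})$ is not a diffeomorphism of full-dimensional spaces but a parametrization of a lower-dimensional support, so the $\delta(t)$ must be absorbed correctly into the measure. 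A secondary technical point is verifying that all the Riesz potentials involved ($I^{\beta-1}$, $I^{-\beta}$, $I^{2(\beta-1)}$, $I^{-\beta}_c$, $I^{\beta}_c$) are well-defined on the relevant function/distribution classes for $\beta<2$, and that $P^{*}_{\beta}$ applied to $I^{\beta-1}g$ (respectively $I^{2(\beta-1)}g$) makes sense; here the hypothesis $\beta<2$ (equivalently $\beta<2n$ after dividing by $n$, consistent with the Riesz potential definitions above) is exactly what keeps the singular multipliers integrable. The $n=1$ case should be essentially a clean verification once the substitution is set up; the content is really in $n=2$.
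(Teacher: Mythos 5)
Your proposal is correct and follows essentially the same route as the paper: write $I^{\beta}f$ (resp. $I^{\beta}_{c}f$) as an inverse Fourier integral, change variables to $(\alpha,\xi_{\bar{\boldsymbol{x}}})$ with Jacobian $|\xi_{\bar{x}_1}|$ (resp. via $\xi_{u_1}=s\xi_{x_1}+t\xi_{x_2}$, $\xi_{u_2}=s\xi_{x_2}-t\xi_{x_1}$, collapsing the $t$-integral with Assumption \ref{assum1} and then setting $s=\frac{1}{\alpha}-1$), invoke the Fourier slice theorem to replace $\hat f$ on the slice by $(Pf)\,\hat{}\,$, and identify the leftover powers of $\xi_{\bar{\boldsymbol{x}}}$ and of $\alpha$ as the Riesz potential on $g$ and the weight defining $P^{*}_{\beta}$. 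The weights, Jacobians, and the role of the assumption are all identified exactly as in the paper's proof, so no further comment is needed.
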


\begin{proof}
We first prove the result for $n=1$. By the definition of $I^{\beta}$, we get
$$
I^{\beta}f(x_1,u_1) = \int_{\mathbb{R}^2} \hat{f}(\xi_{x_1},\xi_{u_1})\|(\xi_{x_1},\xi_{u_1})\|^{-\beta} e^{2\pi i (\xi_{x_1}x_1+\xi_{u_1}u_1)} \dif \xi_{x_1}\xi_{u_1}.
$$
Changing variables by letting $\xi_{x_1}=\alpha\xi_{\bar{x}_1}$ and $\xi_{u_1} = (1-\alpha)\xi_{\bar{x}_1}$ yields
\begin{equation*}
I^{\beta} f(x_1,u_1) =  \int_{\mathbb{R}}\int_{\mathbb{R}}\hat{f}(\alpha\xi_{\bar{x}_1},(1-\alpha)\xi_{\bar{x}_1})\big(\alpha^2+(1-\alpha)^2\big)^{-\frac{\beta}{2}}|\xi_{\bar{x}_1}|^{1-\beta} e^{ 2\pi i A\xi_{\bar{x}_1}}\dif\xi_{\bar{x}_1} \dif \alpha,
\end{equation*}
where $A=\alpha x_1+(1-\alpha)u_1$.
Using (\ref{Pn1}) we obtain
\begin{align*}
I^{\beta}f(x_1,u_1) =  \int_{\mathbb{R}}\big(\alpha^2+(1-\alpha)^2\big)^{-\frac{\beta}{2}} \int_{\mathbb{R}}(Pf)\, \hat{} \,(\alpha,\xi_{\bar{x}_1})|\xi_{\bar{x}_1}|^{1-\beta}
 e^{2\pi i A \xi_{\bar{x}_1}}\dif\xi_{\bar{x}_1}\dif \alpha.
\end{align*}
The inner integral can be expressed as a form with Riesz potential, hence
\begin{align*}
I^{\beta}f(x_1,u_1) 
&= \int_{\mathbb{R}}\big(\alpha^2+(1-\alpha)^2\big)^{-\frac{\beta}{2}}I^{1-\beta}Pf(\alpha,\alpha x_1+(1-\alpha)u_1)\dif \alpha \\
&= P^{*}_{\beta}I^{1-\beta}Pf(x_1,u_1)
\end{align*}
and then the inversion formula for $n=1$ follows by applying $I^{-\beta}$ to both sides of the above equality.

Similarly, for $n=2$ we have 
\begin{align*}
I^{\beta}_c f(x_1,x_2,u_1,u_2)
&=\int_{\mathbb{R}^2}\int_{\mathbb{R}^2}\hat{f}(\xi_{x_1},\xi_{x_2},\xi_{u_1},\xi_{u_2})\|(\xi_{x_1},\xi_{x_2})\|^{-\beta}\|(\xi_{u_1},\xi_{u_2})\|^{-\beta}\\\
&\qquad\times e^{2\pi i (\xi_{x_1}x_1+\xi_{x_2}x_2+\xi_{u_1}u_1+\xi_{u_2}u_2)}\dif\xi_{u_1}\xi_{u_2}\dif \xi_{x_1}\xi_{x_2}.
\end{align*}
Introducing the substitutions $\xi_{u_1}=s \xi_{x_1} + t  \xi_{x_2}$, $\xi_{u_2}=s \xi_{x_2} - t \xi_{x_1}$ in the inner integral yields
\begin{align*}
I^{\beta}_cf(x_1,x_2,u_1,u_2)
&=\int_{\mathbb{R}^2}\int_{\mathbb{R}^2}\hat{f}(\xi_{x_1}, \xi_{x_2},s \xi_{x_1} + t  \xi_{x_2},s \xi_{x_2} - t \xi_{x_1})\|(\xi_{x_1},\xi_{x_2}) \|^{2-\beta} \\
&\qquad\times \|(s \xi_{x_1} + t  \xi_{x_2},s \xi_{x_2} - t \xi_{x_1}) \|^{-\beta} e^{2\pi i B} \dif s t \dif \xi_{x_1}\xi_{x_2},
\end{align*}
where 
$$
B=(x_1+s u_1- t u_2)\xi_{x_1} + (x_2+ s u_2 + t u_1)\xi_{x_2}.
$$
By Assumption \ref{assum1},
\begin{align*}
I^{\beta}_cf(x_1,x_2,u_1,u_2)
&=\int_{\mathbb{R}^2}\int_{\mathbb{R}}\int_{\mathbb{R}}\hat{f}(\xi_{x_1}, \xi_{x_2},s \xi_{x_1},s \xi_{x_2}) \delta(t)  \|(\xi_{x_1},\xi_{x_2}) \|^{2-\beta}\\
&\qquad \times  \|(s \xi_{x_1} + t  \xi_{x_2},s \xi_{x_2} - t \xi_{x_1}) \|^{-\beta}e^{2\pi i B} \dif t\dif s \dif \xi_{x_1}\xi_{x_2}.
\end{align*}
Using the properties of the $\delta$-function follows that
\begin{align*}
I^{\beta}_c f(x_1,x_2,u_1,u_2)
&=\int_{\mathbb{R}^2}\int_{\mathbb{R}} \hat{f}(\xi_{x_1}, \xi_{x_2},s \xi_{x_1},s \xi_{x_2}) \|(\xi_{x_1},\xi_{x_2}) \|^{2(1-\beta)}|s|^{-\beta}\\
& \qquad \times e^{2\pi i [\xi_{x_1}(x_1+s u_1)+\xi_{x_2}(x_2+ s u_2 )]} \dif s \dif \xi_{x_1}\xi_{x_2}.
\end{align*}
Making the substitutions $s = \frac{1}{\alpha}-1 $, $\xi_{x_1}=\alpha \xi_{\bar{x}_1}$ and $\xi_{x_2}=\alpha \xi_{\bar{x}_2}$, we have
\begin{align*}
I^{\beta}_c f(x_1,x_2,u_1,u_2)
&=\int_{\mathbb{R}}\int_{\mathbb{R}^2}\hat{f}(\alpha\xi_{\bar{x}_1},\alpha\xi_{\bar{x}_2},(1-\alpha)\xi_{\bar{x}_1},(1-\alpha)\xi_{\bar{x}_2})\|(\xi_{\bar{x}_1},\xi_{\bar{x}_2}) \|^{2(1-\beta)}\\
&\qquad \times  |\alpha|^{2-\beta} |1-\alpha|^{-\beta} e^{2\pi i C}\dif \xi_{\bar{x}_1}\xi_{\bar{x}_2}\dif \alpha,
\end{align*}
where 
$$
C = (\alpha x_1+(1-\alpha) u_1) \xi_{\bar{x}_1} + ( \alpha x_2+(1-\alpha)u_2) \xi_{\bar{x}_2}.
$$
Here we express $\hat{f}$ by $(Pf)\, \hat{}\,$ in (\ref{Pn2}). It follows that
\begin{align*}
I^{\beta}_c f(x_1,x_2,u_1,u_2) &= \int_{\mathbb{R}}
 |\alpha|^{2-\beta} |1-\alpha|^{-\beta} \int_{\mathbb{R}^2} (Pf)\, \hat{}\,(\alpha,\xi_{\bar{x}_1},\xi_{\bar{x}_2}) \\
& \quad \quad \quad \times \|(\xi_{\bar{x}_1},\xi_{\bar{x}_2}) \|^{2(1-\beta)}  e^{2\pi i C}\dif \xi_{\bar{x}_1}\xi_{\bar{x}_2}\dif\alpha.
\end{align*}
By definition of the Riesz potential, hence
\begin{align*}
& I^{\beta}_c f(x_1,x_2,u_1,u_2)\\
{}=&\int_{\mathbb{R}} |\alpha|^{2-\beta}|1-\alpha|^{-\beta}I^{2(1-\beta)}Pf(\alpha,\alpha x_1+(1-\alpha)u_1,\alpha x_2+(1-\alpha)u_2))\dif \alpha \\
{}=&P_{\beta}^{*}I^{2(\beta-1)}Pf(x_1,x_2,u_1,u_2),
\end{align*}
and the inversion formula for $n=2$ follows by applying $I^{-\beta}_{c}$ to both sides of the above equality.
\end{proof}

\begin{remark}
\rm{
Let $f(\boldsymbol{x},\boldsymbol{u}) \in \mathcal{S}$$(\mathbb{R}^{2n})$. For any $\beta< 2n$, the inversion formula of the Radon transform is given by \cite{Na}
\begin{equation}
f=\frac{1}{2}I^{-\beta}R^{\#}I^{\beta-2n+1}g,\quad g=Rf. \label{RadonInversion}
\end{equation}
By comparing the inversion formula for the photography transform with that of the classic Radon transform, we can find the following facts.

(i) For $n=1$ and any $\beta<2$, the inversion formula (\ref{RadonInversion}) reads as
\[
f=\frac{1}{2}I^{-\beta}R^{\#}I^{\beta-1}g,\quad g=Rf.
\]
It is worth mentioning formally that the 2-dimensional classic Radon transform inversion formula is very similar to that of the photography transform in (\ref{Invn1}). However, there still exists a little difference in the part of the dual operator. To be more precise, $R^{\#}g$ is equivalent to applying $R^{\#}$ to $g$ directly. However, $P^{*}_{\beta}g$ is equivalent to applying $P^{*}$ to the product of $g$ and a weight function $\omega(\alpha)=\big(\alpha^2+(1-\alpha)^2\big)^{-\beta /2}$. 

(ii) For $n=2$ and any $\beta<4$, the inversion formula (\ref{RadonInversion}) reads as

\[
f=\frac{1}{2}I^{-\beta}R^{\#}I^{\beta-3}g,\quad g=Rf.
\]
Three differences need to be mentioned here regarding the inversion formulas of two transforms from right to left when $n=2$. The first one is the coefficient of the Riesz potential which acts on $g$ directly. The coefficient of the former is $2(\beta-1)$, and the latter one is $\beta-3$. The second one is the part of the dual operator, analogous to that when $n = 1$, but the weight function for $P^{*}_{\beta}g$ here is $\omega(\alpha) = |\alpha|^{2-\beta} |1-\alpha|^{-\beta}$. Finally, the coupled Riesz potential, which reflects the coupling relation between variables in the photography transform $P$, is different from the Riesz potential in the Radon transform inversion formula. This one illustrates the essential difference between the two transforms. For the $\delta$-function in the $2$-dimensional photography transform, the coefficients multiplied by $x_1$ and $x_2$ are the same as $\alpha$. Thus, $x_1$ and $x_2$ are coupled together in inversion, and so do $u_1$ and $u_2$. However, the classic Radon transform does not have such a coupling relation between variables.
}
\end{remark}

By taking $\beta=0$ and $\beta=1$ in the inversion formulas for the photography transform when $n=1$ and $n=2$, we can obtain the following two corollaries directly.

\begin{cor}
Letting $\beta=0$, 
\\{\rm (}i{\rm )} for $n=1$, we have
\begin{equation}
f=P^{*}I^{-1}g,\quad g=Pf;\label{Beta0n1}
\end{equation}
{\rm (}ii{\rm )} for $n=2$ and under Assumption \ref{assum1}, we have
\begin{equation}
f = P^{*}_{0}I^{-2}g,\quad g=Pf.\label{Beta0n2}
\end{equation}
\end{cor}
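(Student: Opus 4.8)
The plan is to derive both identities by specializing the preceding inversion theorem at the admissible parameter value $\beta=0$, which is permitted in both parts since $0<2$. The one preliminary fact I would record is that the zeroth-order Riesz potentials act as the identity: from $(I^{\beta}f)\,\hat{}\,(\xi_{\boldsymbol{x}},\xi_{\boldsymbol{u}})=\|(\xi_{\boldsymbol{x}},\xi_{\boldsymbol{u}})\|^{-\beta}\hat f(\xi_{\boldsymbol{x}},\xi_{\boldsymbol{u}})$ the multiplier at $\beta=0$ is $\|(\xi_{\boldsymbol{x}},\xi_{\boldsymbol{u}})\|^{0}\equiv 1$, hence $I^{0}=\mathrm{id}$ on $\mathcal S(\mathbb R^{2n})$; similarly the multiplier $\|(\xi_{x_1},\xi_{x_2})\|^{0}\|(\xi_{u_1},\xi_{u_2})\|^{0}\equiv 1$ gives $I^{0}_{c}=\mathrm{id}$. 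Equivalently, one may simply invoke the already-verified identities $I^{-\beta}I^{\beta}=\mathrm{id}$ and $I^{-\beta}_{c}I^{\beta}_{c}=\mathrm{id}$ at $\beta=0$.

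For part (i), putting $\beta=0$ into (\ref{Invn1}) yields $f=I^{0}P^{*}_{0}I^{-1}g=P^{*}_{0}I^{-1}g$. It then remains only to unwind $P^{*}_{0}$: by definition, for $n=1$ one has $P^{*}_{\beta}g=P^{*}\big((\alpha^{2}+(1-\alpha)^{2})^{-\beta/2}g\big)$, and at $\beta=0$ the weight $(\alpha^{2}+(1-\alpha)^{2})^{0}$ is identically $1$, so $P^{*}_{0}=P^{*}$ and we obtain $f=P^{*}I^{-1}g$, i.e. (\ref{Beta0n1}). For part (ii), working under Assumption \ref{assum1} and putting $\beta=0$ into (\ref{Invn2}) gives $f=I^{0}_{c}P^{*}_{0}I^{-2}g=P^{*}_{0}I^{-2}g$, which is precisely (\ref{Beta0n2}); here one must \emph{not} simplify $P^{*}_{0}$ further, since for $n=2$ the weight is $|\alpha|^{2-\beta}|1-\alpha|^{-\beta}$, which at $\beta=0$ equals $|\alpha|^{2}\not\equiv 1$, so that $P^{*}_{0}g=P^{*}(|\alpha|^{2}g)$ genuinely differs from $P^{*}g$.

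There is no substantive obstacle in this argument: the corollary is a direct specialization of the inversion theorem, and the only points meriting a line of justification are the admissibility $\beta=0<2$, the triviality of $I^{0}$ and $I^{0}_{c}$, and—the one place to be slightly careful—the fact that the weight inside $P^{*}_{0}$ collapses to $1$ only in the one-dimensional case, while the factor $|\alpha|^{2}$ must be retained when $n=2$.
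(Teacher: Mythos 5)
Your argument is correct and is exactly the route the paper takes: the corollary is obtained by setting $\beta=0$ in the inversion formulas (\ref{Invn1}) and (\ref{Invn2}), using $I^{0}=I^{0}_{c}=\mathrm{id}$. Your added observation that the weight in $P^{*}_{\beta}$ collapses to $1$ only for $n=1$ (so that $P^{*}_{0}=P^{*}$ there) while the factor $|\alpha|^{2}$ must be retained for $n=2$ is a correct and worthwhile clarification of why the two displayed formulas look different.
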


\begin{remark}\leavevmode
\rm{(i) For implementation of (\ref{Beta0n2}), we consider the approximate formula
$$
f\approx \int_{-\infty}^{+\infty} \alpha^2 (h_b*g)(\alpha,\alpha x_1+(1-\alpha u_1),\alpha x_2+(1-\alpha)u_2))\dif \alpha,
$$
where
$$
h_b(\bar{x}_1,\bar{x}_2)=\int_{\| (\xi_{\bar{x}_1},\xi_{\bar{x}_2})\|<b }\| (\xi_{\bar{x}_1},\xi_{\bar{x}_2}) \|^2 e^{2\pi i(\xi_{\bar{x}_1}\bar{x}_1+ \xi_{\bar{x}_2}\bar{x}_2)}\dif\xi_{\bar{x}_1}\xi_{\bar{x}_2},
$$
and $b$ is the cut-off frequency. This is the FBP method for the $2$-dimensional photography transform and other similar forms can be found in previous studies \cite{LA,LC}. The FBP method for the $1$-dimensional photography transform can also be obtained by a similar discussion.

(ii) By the Hilbert transform $H$ defined by
$$
(Hg)\, \hat{} \, (\xi_{\bar{x}_1}) = -i\, {\rm sgn}(\xi_{\bar{x}_1}) \hat{g}(\xi_{\bar{x}_1}),
$$
where 
\begin{equation*}
{\rm sgn}(\xi_{\bar{x}_1}) = \begin{cases}
1,  &\text{if} \ \xi_{\bar{x}_1}>0,\\
0, &\text{if}  \ \xi_{\bar{x}_1}=0,\\
-1, &\text{if}  \ \xi_{\bar{x}_1}<0,
\end{cases}
\end{equation*}
and the equality $I^{-1}g= (2\pi)^{-1} Hg^{(1)}$, we can obtain an equivalent formula of (\ref{Beta0n1})
\begin{align*}
f&= (2\pi)^{-1}P^{*}Hg^{(1)}\\
&= (2\pi)^{-1}\int_{\mathbb{R}} Hg^{(1)}(\alpha,\alpha x_1+(1-\alpha)u_1)\dif \alpha,
\end{align*}
where the derivative $g^{(1)}$ is taken with respect to the second variable. Unfortunately, the problem of reconstructing a function from its photography transform is not local in the sense that computing the function at some point requires all values of $g$ due to the non-locality of Hilbert transform \cite{Na}.

(iii) By the 2-dimensional Laplacian $\Delta$ and the equality $I^{-2}=-(2\pi)^{-2}\Delta$, an equivalent formula of (\ref{Beta0n2}) can be obtained as follows,
\begin{align*}
f&=-(2\pi)^{-2}P^{*}_{0}\Delta g \\
 &=-(2\pi)^{-2}\int_{\mathbb{R}}\alpha^2 (\Delta g)(\alpha,\alpha x_1+(1-\alpha)u_1,\alpha x_2+(1-\alpha)u_2)\dif \alpha,
\end{align*}
where $\Delta g$ is acting on the variables $\bar{x}_1$ and $\bar{x}_2$ of $g$. Since the Laplacian is a local operator, the inversion for the 2-dimensional photography transform only relies on local information of $g$.
}
\end{remark}

\begin{cor}
Letting $\beta=1$,
\\{\rm (}i{\rm )} for $n=1$, we have
\begin{equation}
f=I^{-1}P^{*}_{1}g,\quad g=Pf;\label{Beta1n1}
\end{equation}
{\rm (}ii{\rm )} for $n=2$, and under Assumption \ref{assum1}, we have
\begin{equation}
f = I^{-1}_{c} P^{*}_1 g,\quad g=Pf.\label{Beta1n2}
\end{equation}
\end{cor}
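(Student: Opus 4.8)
The plan is to read both identities off the general inversion theorem above by specializing to $\beta = 1$. First I would verify that this is admissible: the theorem requires $\beta < 2$ for both $n = 1$ and $n = 2$, and $1 < 2$, so the specialization is legitimate; moreover, for $n = 2$ the corollary already carries over Assumption \ref{assum1}, so no extra hypothesis is needed.

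Next I would record the elementary fact that the Riesz potential of order zero is the identity operator. From the defining relation $(I^{\beta}h)\,\hat{}\,(\xi) = \|\xi\|^{-\beta}\hat{h}(\xi)$ with $\beta = 0$ we obtain $(I^{0}h)\,\hat{}\, = \hat{h}$, hence $I^{0}h = h$ after Fourier inversion; the same computation with the definition of the coupled Riesz potential gives $I^{0}_{c}h = h$. With this in hand the rest is a direct substitution. In formula (\ref{Invn1}) the middle factor is $I^{\beta - 1}$, which at $\beta = 1$ becomes $I^{0} = \mathrm{id}$, so (\ref{Invn1}) collapses to $f = I^{-1}P^{*}_{1}g$, where by the definition of $P^{*}_{\beta}$ the operator $P^{*}_{1}$ is $P^{*}$ precomposed with multiplication by $\bigl(\alpha^{2} + (1 - \alpha)^{2}\bigr)^{-1/2}$. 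Likewise, in formula (\ref{Invn2}) the middle factor is $I^{2(\beta - 1)}$, which at $\beta = 1$ becomes $I^{0} = \mathrm{id}$, so (\ref{Invn2}) collapses to $f = I^{-1}_{c}P^{*}_{1}g$, where now $P^{*}_{1}$ is $P^{*}$ precomposed with multiplication by $|\alpha|\,|1 - \alpha|^{-1}$.

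Since every step is an unconditional specialization of an already-proved formula together with the triviality $I^{0} = I^{0}_{c} = \mathrm{id}$, there is no genuine obstacle; the only thing worth a second look is confirming that the exponent of the inner Riesz potential — $\beta - 1$ when $n = 1$ and $2(\beta - 1)$ when $n = 2$ — indeed vanishes at $\beta = 1$, which it plainly does in both cases.
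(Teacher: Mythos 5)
Your proposal is correct and matches the paper's intent exactly: the paper states this corollary as an immediate specialization of the general inversion theorem at $\beta=1$, and your verification that $I^{0}=I^{0}_{c}=\mathrm{id}$ together with the correct identification of the weights $\bigl(\alpha^{2}+(1-\alpha)^{2}\bigr)^{-1/2}$ (for $n=1$) and $|\alpha|\,|1-\alpha|^{-1}$ (for $n=2$) in $P^{*}_{1}$ is all that is needed.
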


\begin{remark}{\rm
To consider the implementation of (\ref{Beta1n2}), we can get the BFP method of the $2$-dimensional photography transform $P$
$$
f\approx m_b*(P^{*}_1 g),
$$
where
\begin{align*}
m_b(x_1,x_2,u_1,u_2) 
&= \int_{\|(\xi_{x_1},\xi_{x_2})\|<b}\int_{\|(\xi_{u_1},\xi_{u_2})\|<b}\| (\xi_{x_1},\xi_{x_2})\| \| (\xi_{u_1},\xi_{u_2}) \| |\alpha| |1-\alpha|^{-1} \\  
& \qquad  \times  e^{2 \pi i(\xi_{x_1}x_1+\xi_{x_2}x_2+\xi_{u_1}u_1+\xi_{u_2}u_2)} \dif u_1 u_2\dif x_1 x_2.
\end{align*}
The BFP method of the $1$-dimensional photography transform $P$ can also be similarly obtained.}
\end{remark}

\section{\bf Some other extensions}\label{se:extension}
The main goals of this section are to propose and study two other types of photography transform. The proofs in this section are slight modifications of the proof in previous sections, so we omit them.

Suppose $f(\boldsymbol{x},\boldsymbol{u}) \in \mathcal{S}$$(\mathbb{R}^{2n})$. To consider that the variable $x_i$ in (\ref{2.2}) is multiplied by different $\alpha_i$ instead of the same $\alpha$, we obtain a new photography transform $\bar{P}$ defined by
\begin{align*}
\bar{P}f(\boldsymbol{\alpha},\boldsymbol{\bar{x}})
=\left( \prod_{i=1}^{n}|\alpha_i|\right)^{-1}\int_{\mathbb{R}^n} f\left(\bigg(\frac{1}{\alpha_i} \bar{x}_i+\bigg(1+\frac{1}{\alpha_i}\bigg)u_i\bigg)_{i=1}^{n},\boldsymbol{u}\right) \dif \boldsymbol{u},\label{5.1}
\end{align*}
for all $\boldsymbol{\alpha}=(\alpha_1,\ldots,\alpha_n)^{\top}\in (\mathbb{R}-\{0\})^n$ and $\boldsymbol{\bar{x}}\in\mathbb{R}^n$. For $n=1$, $\bar{P}$ and $P$ coincide. Based on the $\delta$-function, we get an equivalent form of $\bar{P}$
\[
\bar{P}f(\boldsymbol{\alpha},\boldsymbol{\bar{x}})=\int_{\mathbb{R}^n}\int_{\mathbb{R}^n}f(\boldsymbol{x},\boldsymbol{u}) \delta((\alpha_i x_i+(1-\alpha_i)u_i-\bar{x}_i)_{i=1}^n) \dif\boldsymbol{x} \dif\boldsymbol{u}.
\]
It can be seen from this equivalent definition that $x_i$ and $u_i$ in $\delta$-function are multiplied by different coefficients, so there is no coupling relation between variables.

Now, we introduce another new type of the photography transform $\widetilde{P}$. If $n$ is even, for all $\boldsymbol{\alpha}=(\alpha_1,\ldots,\alpha_{\frac{n}{2}})^{\top}\in (\mathbb{R}-\{0\})^{\frac{n}{2}}$ and $\boldsymbol{\bar{x}}\in\mathbb{R}^n$, $\widetilde{P}$ is defined by
\begin{align*}
\widetilde{P}f(\boldsymbol{\alpha},\boldsymbol{\bar{x}})
=\left( \prod_{i=1}^{n/2}|\alpha_i|^{2}\right)^{-1} \int_{\mathbb{R}^n}f\bigg(\bigg( \frac{1}{\alpha_i}\boldsymbol{\bar x}_i+\bigg(1+\frac{1}{\alpha_i}\boldsymbol{u}_i \bigg) \bigg)_{i=1}^{\frac{n}{2}}, \boldsymbol{u}\bigg)\dif\boldsymbol{u},\label{5.2}
\end{align*}
where $\boldsymbol{\bar x}_i=(\bar{x}_{2i-1},\bar{x}_{2i})^{\top}$. We give an equivalent definition
$$
\widetilde{P}f(\boldsymbol{\alpha},\boldsymbol{\bar{x}})
=\int_{\mathbb{R}^n}\int_{\mathbb{R}^n}f(\boldsymbol{x},\boldsymbol{u}) \delta((\alpha_i \boldsymbol{x}_i+(1-\alpha_i)\boldsymbol{u}_i-\boldsymbol{\bar{x}}_i)_{i=1}^{\frac{n}{2}}) \dif \boldsymbol{x} \dif\boldsymbol{u}.
$$
If $n$ is odd, for all $\boldsymbol{\alpha}=(\alpha_1,\ldots,\alpha_{\frac{n+1}{2}})^{\top}\in (\mathbb{R}-\{0\})^{\frac{n+1}{2}}$ and $\boldsymbol{\bar{x}}\in\mathbb{R}^n$, $\widetilde{P}$ is defined by 
\begin{align*}
\widetilde{P}f(\boldsymbol{\alpha},\boldsymbol{\bar{x}})
&=\left( |\alpha_n|\prod_{i=1}^{(n-1)/2}|\alpha_i|^{2}\right)^{-1} \int_{\mathbb{R}^n}f\bigg(\bigg( \frac{1}{\alpha_i}\boldsymbol{\bar x}_i+\bigg(1+\frac{1}{\alpha_i}\boldsymbol{u}_i \bigg) \bigg)_{i=1}^{\frac{n-1}{2}},\\
&\quad \quad \quad \frac{1}{\alpha_{\frac{n+1}{2}}}\bar{x}_n+\bigg( 1+\frac{1}{\alpha_{\frac{n+1}{2}}} \bigg)u_n,  \boldsymbol{u}\bigg)\dif\boldsymbol{u},
\end{align*}
and its equivalent definition is given by
\begin{align*}
\widetilde{P} f(\boldsymbol{\alpha},\boldsymbol{\bar{x}}) 
&= \int_{\mathbb{R}^n}\int_{\mathbb{R}^n} f(\boldsymbol{x},\boldsymbol{u}) \\
&\times \delta \left( (\alpha_i \boldsymbol{x}_i+(1-\alpha_i)\boldsymbol{u}_i-\boldsymbol{\bar{x}}_i )_{i=1}^{\frac{n-1}{2}}, \alpha_{\frac{n+1}{2}}x_n+(1-\alpha_{\frac{n+1}{2}})u_n-\bar{x}_n\right) \dif \boldsymbol{x} \dif\boldsymbol{u}.    
\end{align*}
For $n=1,2$, $\widetilde{P}$ and $P$ coincide. When $n$ is even, there is a coupling relation between $x_{2i-1}$ and $x_{2i}$ in $(\boldsymbol{x}_i,\boldsymbol{u}_i)$. Meanwhile, when $n$ is odd, the variables have a coupling relation similar to the even case except for $(x_n,u_n)$. We also write $\bar{P}_{\boldsymbol{\alpha}}(\boldsymbol{\bar{x}}) =\bar{P}f(\boldsymbol{\alpha},\boldsymbol{\bar{x}})$
and $\widetilde{P}_{\boldsymbol{\alpha}}(\boldsymbol{\bar{x}}) =\widetilde{P}f(\boldsymbol{\alpha},\boldsymbol{\bar{x}}).$

Now, the Fourier slice theorem and the convolution theorem of both $\bar{P}_{\boldsymbol{\alpha}}$ and $\widetilde{P}_{\boldsymbol{\alpha}}$ are given as follows.

\begin{proposition}
For $f(\boldsymbol{x},\boldsymbol{u}),\ g(\boldsymbol{x},\boldsymbol{u})\in\mathcal{S}$$(\mathbb{R}^{2n})$, we have
\\{\rm (}i{\rm )}
$$
(\bar{P}_{\boldsymbol{\alpha}}f)\, \hat{} \, (\xi_{\bar{\boldsymbol{x}}}) = \hat{f}((\alpha_i \xi_{\bar{x}_i})_{i=1}^{n},((1-\alpha_i) \xi_{\bar{x}_i})_{i=1}^n), \quad \boldsymbol{\alpha} \in \mathbb{R}^n, 
$$
if $n$ is even 
$$
(\widetilde{P}_{\boldsymbol{\alpha}}f)\, \hat{} \,(\xi_{\bar{\boldsymbol{x}}})=\hat{f}((\alpha_i \xi_{\boldsymbol{\bar{x}}_i})_{i=1}^{\frac{n}{2}},((1-\alpha_i) \xi_{\boldsymbol{\bar{x}}_i})_{i=1}^{\frac{n}{2}}), \quad \boldsymbol{\alpha} \in \mathbb{R}^\frac{n}{2},
$$
and if $n$ is odd,
$$
(\widetilde{P}_{\boldsymbol{\alpha}}f)\, \hat{} \, (\xi_{\bar{\boldsymbol{x}}})=\hat{f}((\alpha_i \xi_{\boldsymbol{\bar{x}}_i})_{i=1}^{\frac{n-1}{2}},\alpha_{\frac{n+1}{2}}\xi_{\bar{x}_n},((1-\alpha_i) \xi_{\boldsymbol{\bar{x}}_i})_{i=1}^{\frac{n-1}{2}},(1-\alpha_{\frac{n+1}{2}})\xi_{\bar{x}_n}), \quad \boldsymbol{\alpha} \in \mathbb{R}^\frac{n+1}{2};
$$
{\rm (}ii{\rm )} 
$$
\bar{P}_{\boldsymbol{\alpha}}(f*g) = \bar{P}_{\boldsymbol{\alpha}}f*\bar{P}_{\boldsymbol{\alpha}}g,\quad \boldsymbol{\alpha} \in \mathbb{R}^n,
$$
$$
\widetilde{P}_{\boldsymbol{\alpha}}(f*g) = \widetilde{P}_{\boldsymbol{\alpha}}f*\widetilde{P}_{\boldsymbol{\alpha}}g,
$$
where $\boldsymbol{\alpha} \in \mathbb{R}^\frac{n}{2}$ if $n$ is even and $\boldsymbol{\alpha} \in \mathbb{R}^\frac{n+1}{2}$ if $n$ is odd.
\end{proposition}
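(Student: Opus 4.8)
The plan is to adapt the proof of Theorem \ref{FandC} essentially verbatim, replacing the single scalar change of variables used there by a block change of variables tailored to the coupling structure of each transform. First I would handle $\bar{P}$: starting from the Fourier transform (\ref{Fourier}) of $\bar{P}_{\boldsymbol{\alpha}}f$ together with the integral expression for $\bar{P}f$, I would perform the componentwise substitution $u_i=\bar{u}_i$ and $x_i=\frac{1}{\alpha_i}\bar{x}_i+\left(1-\frac{1}{\alpha_i}\right)\bar{u}_i$ for each $i=1,\ldots,n$. The Jacobian of this map is $\prod_{i=1}^{n}|\alpha_i|$, which cancels the normalizing prefactor $\left(\prod_{i=1}^{n}|\alpha_i|\right)^{-1}$, while the phase $\bar{\boldsymbol{x}}\cdot\xi_{\bar{\boldsymbol{x}}}$ turns into $\sum_{i=1}^{n}\left(\alpha_i x_i+(1-\alpha_i)u_i\right)\xi_{\bar{x}_i}$, which is precisely the pairing of $(\boldsymbol{x},\boldsymbol{u})$ against the point $\big((\alpha_i\xi_{\bar{x}_i})_{i=1}^{n},((1-\alpha_i)\xi_{\bar{x}_i})_{i=1}^{n}\big)$; recognizing the remaining integral as a Fourier transform of $f$ then gives part (i) for $\bar{P}$.

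For $\widetilde{P}$ with $n$ even I would group the variables into the pairs $\boldsymbol{x}_i=(x_{2i-1},x_{2i})^{\top}$ and $\boldsymbol{u}_i=(u_{2i-1},u_{2i})^{\top}$ and run the identical computation with the blockwise substitution $\boldsymbol{u}_i=\bar{\boldsymbol{u}}_i$, $\boldsymbol{x}_i=\frac{1}{\alpha_i}\bar{\boldsymbol{x}}_i+\left(1-\frac{1}{\alpha_i}\right)\bar{\boldsymbol{u}}_i$ for $i=1,\ldots,n/2$; now each $2$-dimensional block contributes a Jacobian factor $|\alpha_i|^{2}$, matching the prefactor $\left(\prod_{i=1}^{n/2}|\alpha_i|^{2}\right)^{-1}$, and the phase rearranges into the pairing against $\big((\alpha_i\xi_{\boldsymbol{\bar{x}}_i})_{i=1}^{n/2},((1-\alpha_i)\xi_{\boldsymbol{\bar{x}}_i})_{i=1}^{n/2}\big)$. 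For $n$ odd the only modification is to carry the leftover scalar component $x_n$ along with its own parameter $\alpha_{(n+1)/2}$ exactly as in the one-dimensional computation inside the proof of Theorem \ref{FandC}, producing the extra Jacobian factor $|\alpha_{(n+1)/2}|$ and the extra slots $\alpha_{(n+1)/2}\xi_{\bar{x}_n}$ and $(1-\alpha_{(n+1)/2})\xi_{\bar{x}_n}$ in the argument of $\hat{f}$.

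With part (i) in hand for both $\bar{P}_{\boldsymbol{\alpha}}$ and $\widetilde{P}_{\boldsymbol{\alpha}}$, part (ii) follows by the same short argument as in Theorem \ref{FandC}: apply (i) to $f*g$, use $(f*g)\,\hat{}\,=\hat{f}\,\hat{g}$ to factor the value of the transform of $f*g$ at the relevant coupled point as the product of the values of $\hat{f}$ and $\hat{g}$ there, re-identify the two factors through (i) as $(\bar{P}_{\boldsymbol{\alpha}}f)\,\hat{}\,(\xi_{\bar{\boldsymbol{x}}})\,(\bar{P}_{\boldsymbol{\alpha}}g)\,\hat{}\,(\xi_{\bar{\boldsymbol{x}}})$ (respectively with $\widetilde{P}$), and apply the inverse Fourier transform to both sides. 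Since every function involved is Schwartz, all the substitutions and applications of Fubini's theorem are legitimate. The only genuine hurdle is notational: keeping the block indexing and the even/odd split consistent throughout; there is no new analytic content beyond the proof of Theorem \ref{FandC}, which is why the details are omitted.
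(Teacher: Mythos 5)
Your proposal is correct and is exactly the argument the paper intends: the paper omits the proof of this proposition, stating that it is a slight modification of the proof of Theorem \ref{FandC}, and your componentwise (for $\bar{P}$) and blockwise (for $\widetilde{P}$) changes of variables with Jacobians $\prod_i|\alpha_i|$, $\prod_i|\alpha_i|^{2}$, and the extra $|\alpha_{(n+1)/2}|$ in the odd case are precisely those modifications. Part (ii) via $(f*g)\,\hat{}\,=\hat f\,\hat g$ and inversion also matches the paper's argument verbatim.
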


Along the similar argument with (\ref{3.3.1})$-$(\ref{3.3.3}), the dual operators $\bar{P}^{*}$ and $\widetilde{P}^{*}$ are defined by
$$
(\bar{P}^{*}g)(\boldsymbol{x},\boldsymbol{u})=\int_{\mathbb{R}^n} g\left(\boldsymbol{\alpha},(\alpha_i x_i+(1-\alpha_i)u_i)_{i=1}^n\right)\dif\boldsymbol{\alpha},
$$
if $n$ is even
$$
(\widetilde{P}^{*}g)(\boldsymbol{x},\boldsymbol{u})=\int_{\mathbb{R}^{\frac{n}{2}}} g\left(\boldsymbol{\alpha},(\alpha_i \boldsymbol{x}_i+(1-\alpha_i)\boldsymbol{u}_i\right)_{i=1}^{\frac{n}{2}})\dif\boldsymbol{\alpha},
$$
and if $n$ is odd
$$
(\widetilde{P}^{*}g)(\boldsymbol{x},\boldsymbol{u})=\int_{\mathbb{R}^{\frac{n+1}{2}}}g\left(\boldsymbol{\alpha},(\alpha_i \boldsymbol{x}_i+(1-\alpha_i)\boldsymbol{u}_i)_{i=1}^{\frac{n-1}{2}},\alpha_{\frac{n+1}{2}} x_n+(1-\alpha_{\frac{n+1}{2}})u_n\right)\dif\boldsymbol{\alpha}.
$$
Similarly, we can derive the convolution property related to the dual operators and the representation of the normal operator for $\bar P$ and $\widetilde{P}$.
\begin{proposition} 
For $f(\boldsymbol{x},\boldsymbol{u}) \in \mathcal{S}$$(\mathbb{R}^{2n})$ and
\\{\rm (}i{\rm )} $g(\boldsymbol{\alpha},\boldsymbol{\bar{x}})\in\mathcal{S}$$(\mathbb{R}^{n}\times \mathbb{R}^{n})$, we have 
$$
(\bar{P}^{*}g)*f = \bar{P}^{*}(g*Pf);
$$
{\rm (}ii{\rm )} if $n$ is even $g(\boldsymbol{\alpha},\boldsymbol{\bar{x}}) \in \mathcal{S}$$(\mathbb{R}^{\frac{n}{2}}\times \mathbb{R}^{n})$ and if $n$ is odd $g(\boldsymbol{\alpha},\boldsymbol{\bar{x}}) \in \mathcal{S}$$(\mathbb{R}^{\frac{n+1}{2}}\times \mathbb{R}^{n})$, we have 
$$
(\widetilde{P}^{*}g)*f = \widetilde{P}^{*}(g*Pf).
$$
\end{proposition}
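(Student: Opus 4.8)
The plan is to mimic the proof of Theorem~\ref{dual} essentially verbatim, carrying along one extra copy of the change of variables used there for each coupled block of variables, first for $\bar P$ and then for $\widetilde P$.

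For part (i), I would start from the left-hand side, evaluated at $(\boldsymbol x,\boldsymbol u)$, unwind the $2n$-fold convolution and insert the definition of $\bar P^{*}$ to obtain
\begin{align*}
((\bar P^{*}g)*f)(\boldsymbol x,\boldsymbol u)
&=\int_{\mathbb{R}^{n}}\int_{\mathbb{R}^{n}}\int_{\mathbb{R}^{n}}
 g\big(\boldsymbol\alpha,(\alpha_i(x_i-x_i')+(1-\alpha_i)(u_i-u_i'))_{i=1}^{n}\big)\\
&\qquad\times f(\boldsymbol x',\boldsymbol u')\,\dif\boldsymbol\alpha\,\dif\boldsymbol x'\,\dif\boldsymbol u'.
\end{align*}
For the right-hand side, applying the definition of $\bar P^{*}$, then the convolution in the $\bar{\boldsymbol x}$-variable (the analogue of (\ref{convolution})), and then the $\delta$-form of $\bar P f$, produces a triple integral over $\boldsymbol\alpha$, $\bar{\boldsymbol x}$ and the inner variable $\bar{\boldsymbol u}$ carrying the prefactor $\big(\prod_i|\alpha_i|\big)^{-1}$. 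In the $\bar{\boldsymbol x}$-integration I would then substitute, coordinatewise, $u_i'=\bar u_i$ and $x_i'=\tfrac1{\alpha_i}\bar x_i+\big(1-\tfrac1{\alpha_i}\big)\bar u_i$; its Jacobian is $\prod_i|\alpha_i|$, which exactly absorbs the prefactor, while the one-line affine identity $\alpha_i x_i+(1-\alpha_i)u_i-\bar x_i=\alpha_i(x_i-x_i')+(1-\alpha_i)(u_i-u_i')$ (valid for each $i$) shows that the argument of $g$ on the right coincides with the one on the left. Comparing the two expressions proves (i); Fubini's theorem, legitimate since $f,g\in\mathcal S$, justifies all interchanges of integration.

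Part (ii) is the same computation with the variables regrouped into the coupled two-dimensional blocks $\boldsymbol x_i=(x_{2i-1},x_{2i})$, $\boldsymbol u_i=(u_{2i-1},u_{2i})$ used in the definition of $\widetilde P$; when $n$ is odd the leftover scalar pair $(x_n,u_n)$ is treated exactly as a one-dimensional $\bar P$-block. The block substitution $\boldsymbol x_i'=\tfrac1{\alpha_i}\bar{\boldsymbol x}_i+\big(1-\tfrac1{\alpha_i}\big)\boldsymbol u_i'$ has Jacobian $\alpha_i^{2}$ (respectively $|\alpha_{(n+1)/2}|$ for the scalar block), which once more cancels the normalising constant $\big(\prod_i|\alpha_i|^{2}\big)^{-1}$ (respectively $\big(|\alpha_{(n+1)/2}|\prod_i|\alpha_i|^{2}\big)^{-1}$), and the affine identity above holds blockwise, so the identical comparison closes the proof.

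I do not expect a genuine obstacle: everything reduces to Fubini's theorem and the linear substitution already exploited in Theorem~\ref{dual}. The only delicate point is purely notational — keeping the coupled blocks and the two parity cases of $\widetilde P$ straight, and verifying that the several Jacobian factors cancel the normalising constants in the definitions of $\bar P$ and $\widetilde P$ exactly.
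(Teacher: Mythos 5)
Your proposal is correct and is precisely the argument the paper intends: the paper omits the proof, stating that it is a slight modification of the proof of Theorem~\ref{dual}, and your coordinatewise/blockwise change of variables with the Jacobian factors $\prod_i|\alpha_i|$ and $\prod_i|\alpha_i|^2$ cancelling the normalising constants is exactly that modification. You also (correctly, if tacitly) read the right-hand sides as $\bar P^{*}(g*\bar Pf)$ and $\widetilde P^{*}(g*\widetilde Pf)$, which is what the statement must mean for the convolution to be well defined.
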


\begin{proposition}
For $f(\boldsymbol{x},\boldsymbol{u}) \in \mathcal{S}$$(\mathbb{R}^{2n})$ we have
\\{\rm (}i{\rm )}
$$
\bar{P}^{*}\bar{P}f = \left\{\prod_{i=1}^{n}|x_i-u_i|^{-1} \right\}*f,
$$
{\rm (}ii{\rm )} if $n$ is even 
$$
\widetilde{P}^{*}\widetilde{P}f = \left\{ \frac{\prod_{i=1}^{n/2}\delta\left( \sin (\phi_i-\varphi_i)\right)}{\prod_{i=1}^{n/2}\|(x_i,u_i) \|} \right\}*f,
$$
where $\phi_i={\rm arg}(x_{2i-1},u_{2i-1})$, $\varphi_i={\rm arg}(x_{2i},u_{2i})$, $i=1,\ldots,n/2$ and if $n$ is odd, 
$$
\widetilde{P}^{*}\widetilde{P}f = \left\{ \frac{\prod_{i=1}^{(n-1)/2}\delta\big( \sin (\phi_i-\varphi_i)\big)}{|x_n-u_n|\prod_{i=1}^{(n-1)/2}\|(x_i,u_i) \|} \right\}*f.
$$
where $\phi_i={\rm arg}(x_{2i-1},u_{2i-1})$, $\varphi_i={\rm arg}(x_{2i},u_{2i})$, $i=1,\ldots,(n-1)/2$.
\end{proposition}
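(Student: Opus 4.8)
The plan is to run the proof of Theorem~\ref{normal} essentially line by line, the only new ingredient being that in $\bar P$ and $\widetilde P$ the direction parameter has been \emph{decoupled} across coordinates (for $\bar P$) or across two-coordinate blocks (for $\widetilde P$), so the $\boldsymbol{\alpha}$-integral that appears there now splits into a product of one-parameter integrals, each of which has already been evaluated inside that proof.

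First I would write out $\bar P^{*}\bar P f$ (resp. $\widetilde P^{*}\widetilde P f$) at a point $(\boldsymbol{x},\boldsymbol{u})$ by inserting the $\delta$-function form of $\bar P$ (resp. $\widetilde P$) into the formula for $\bar P^{*}$ (resp. $\widetilde P^{*}$) stated just above the proposition, and then exchanging the order of integration (formally, exactly as in the proof of Theorem~\ref{normal}, with $f\in\mathcal S(\mathbb{R}^{2n})$ supplying the needed decay). For $\bar P$ this yields
$$
\bar P^{*}\bar P f(\boldsymbol{x},\boldsymbol{u})=\int_{\mathbb{R}^{n}}\int_{\mathbb{R}^{n}} f(\boldsymbol{x}',\boldsymbol{u}')\,K(\boldsymbol{x}-\boldsymbol{x}',\boldsymbol{u}-\boldsymbol{u}')\,d\boldsymbol{x}'\,d\boldsymbol{u}',\qquad
K(\boldsymbol{v},\boldsymbol{w})=\int_{\mathbb{R}^{n}}\prod_{i=1}^{n}\delta\big(\alpha_i v_i+(1-\alpha_i)w_i\big)\,d\boldsymbol{\alpha},
$$
and for $\widetilde P$ the analogous expression, where $K$ is an integral over $\boldsymbol{\alpha}\in\mathbb{R}^{n/2}$ (resp. $\mathbb{R}^{(n+1)/2}$) of the product of the two-dimensional $\delta$-blocks attached to the pairs $(x_{2i-1},u_{2i-1})$, $(x_{2i},u_{2i})$, together with, in the odd case, the single one-dimensional $\delta$-block attached to $(x_n,u_n)$.

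Next comes the key step: since the integrand of $K$ depends on $\alpha_i$ only through the $i$-th $\delta$-block, the $\boldsymbol{\alpha}$-integral factors into independent one-parameter integrals. For $\bar P$ every factor is the $n=1$ building block, $\int_{\mathbb{R}}\delta\big(\alpha_i v_i+(1-\alpha_i)w_i\big)\,d\alpha_i=|v_i-w_i|^{-1}$, obtained from the scaling property of $\delta$ via the change of variable $\alpha_i\mapsto\alpha_i v_i+(1-\alpha_i)w_i$; multiplying over $i$ gives $K(\boldsymbol{v},\boldsymbol{w})=\prod_{i=1}^{n}|v_i-w_i|^{-1}$, hence part~(i). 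For $\widetilde P$ each \emph{paired} block is exactly the $n=2$ building block that the proof of Theorem~\ref{normal} computes — after splitting $\alpha$ into its positive and negative parts and performing the substitutions $\tan\theta=1-1/\alpha$ and $t=\cos\theta$ — namely $\delta\big(\sin(\phi_i-\varphi_i)\big)\big/\big(\|(x_{2i-1},u_{2i-1})\|\,\|(x_{2i},u_{2i})\|\big)$ evaluated at the difference variables, while the leftover block in the odd case contributes the $n=1$ factor $|x_n-u_n|^{-1}$. Multiplying these factors produces the kernels in (ii), and recognizing $\int\int f(\boldsymbol{x}',\boldsymbol{u}')K(\boldsymbol{x}-\boldsymbol{x}',\boldsymbol{u}-\boldsymbol{u}')$ as the $\mathbb{R}^{2n}$-convolution $K*f$ completes the argument.

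The one point that genuinely requires care — and the reason these identities are not utterly trivial — is the over-determined $\delta$-system in each paired block of $\widetilde P$: a single parameter $\alpha_i$ is required to annihilate two independent linear forms, which is possible only where $\sin(\phi_i-\varphi_i)=0$ and therefore forces out the compatibility factor $\delta\big(\sin(\phi_i-\varphi_i)\big)$. Carrying this through demands tracking the Jacobians correctly across the successive substitutions and checking that the normalization prefactors absorbed in the $\delta$-forms ($|\alpha_i|^{-2}$ for a paired block, $|\alpha_n|^{-1}$ for the unpaired one) are precisely those that make each block integral coincide with the already-computed $n=2$ and $n=1$ kernels, so that no spurious constants appear. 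This is exactly the bookkeeping performed in Theorem~\ref{normal}; since nothing beyond it is needed, it is legitimate — as the authors remark — to record these results as slight modifications of that proof.
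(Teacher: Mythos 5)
Your proposal is correct and follows exactly the route the paper intends: the authors omit this proof, stating that it is a slight modification of the proof of Theorem~\ref{normal}, and your factorization of the $\boldsymbol{\alpha}$-integral into independent one-parameter blocks — each evaluating to the already-computed $n=1$ kernel $|x_i-u_i|^{-1}$ or the $n=2$ kernel $\delta\big(\sin(\phi_i-\varphi_i)\big)\big/\big(\|(x_{2i-1},u_{2i-1})\|\,\|(x_{2i},u_{2i})\|\big)$ — is precisely that modification. Your closing observation about the over-determined $\delta$-system in each paired block forcing the compatibility factor is the right point to flag; nothing is missing.
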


To proceed, we need to define two operators
$$
(\bar{I}^{\beta}_{c}f)\, \hat{} \, (\xi_{\boldsymbol{x}},\xi_{\boldsymbol{u}}) = \prod_{i=1}^{n} \|(\xi_{x_i},\xi_{u_i}) \|^{-\beta} \hat{f}(\xi_{\boldsymbol{x}},\xi_{\boldsymbol{u}}), \quad \beta<2,
$$
and
$$
(\bar{I}^{\beta}g)\, \hat{} \,(\boldsymbol{\alpha},\xi_{\boldsymbol{\bar{x}}}) = \prod_{i=1}^{n}|\xi_{\bar{x}_i}|^{-\beta}  \hat{g}(\boldsymbol{\alpha},\xi_{\boldsymbol{\bar{x}}}),\quad \beta<2.
$$
Then an exact analytic inversion formula for $\bar{P}$ is given by the next proposition.
\begin{proposition}
Let $f(\boldsymbol{x},\boldsymbol{u})\in\mathcal{S}$$(\mathbb{R}^{2n})$ and denote
$$
\bar{P}^{*}_{\beta}g = P^{*}\Big(\prod_{i=1}^{n} \big(\alpha_i^2+(1-\alpha_i)^2\big)^{-\frac{\beta}{2}}g\Big).
$$
For any $\beta<2$ we have 
$$
f=\bar{I}^{-\beta}_{c} \bar{P}^{*}_{\beta} \bar{I}^{\beta-1}g, \quad g=\bar{P}f.
$$
\end{proposition}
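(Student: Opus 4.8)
The plan is to run, essentially verbatim, the argument that establishes the one–dimensional inversion formula (\ref{Invn1}) for $P$, but now in each of the $n$ coordinate slots $(x_i,u_i)$ independently: since $\bar{P}$ carries no coupling relation between variables (each $x_i$ is paired only with the single weight $\alpha_i$), the one–dimensional computation carried out in slot $i$ produces factors indexed by $i$, and these tensorise into exactly the products $\prod_{i=1}^{n}$ appearing in the definitions of $\bar{I}^{\beta}_{c}$, $\bar{I}^{\beta}$ and $\bar{P}^{*}_{\beta}$. Throughout I write $\bar{P}^{*}_{\beta}g$ for $\bar{P}^{*}\big(\prod_{i=1}^{n}(\alpha_i^{2}+(1-\alpha_i)^{2})^{-\beta/2}g\big)$, the evident weighted dual operator attached to $\bar{P}$.

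First I would expand $\bar{I}^{\beta}_{c}f$ through Fourier inversion,
$$
\bar{I}^{\beta}_{c}f(\boldsymbol{x},\boldsymbol{u})=\int_{\mathbb{R}^{n}}\int_{\mathbb{R}^{n}}\hat{f}(\xi_{\boldsymbol{x}},\xi_{\boldsymbol{u}})\prod_{i=1}^{n}\|(\xi_{x_i},\xi_{u_i})\|^{-\beta}\,e^{2\pi i(\xi_{\boldsymbol{x}}\cdot\boldsymbol{x}+\xi_{\boldsymbol{u}}\cdot\boldsymbol{u})}\dif\xi_{\boldsymbol{x}}\dif\xi_{\boldsymbol{u}},
$$
and then perform, simultaneously for $i=1,\dots,n$, the substitution $\xi_{x_i}=\alpha_i\xi_{\bar{x}_i}$, $\xi_{u_i}=(1-\alpha_i)\xi_{\bar{x}_i}$. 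For each pair the Jacobian of $(\xi_{x_i},\xi_{u_i})\mapsto(\alpha_i,\xi_{\bar{x}_i})$ equals $|\xi_{\bar{x}_i}|$ and $\|(\xi_{x_i},\xi_{u_i})\|=|\xi_{\bar{x}_i}|\,(\alpha_i^{2}+(1-\alpha_i)^{2})^{1/2}$, so the norm factors together with the Jacobians collapse to $\prod_{i=1}^{n}(\alpha_i^{2}+(1-\alpha_i)^{2})^{-\beta/2}\,|\xi_{\bar{x}_i}|^{1-\beta}$, while the phase becomes $2\pi i\sum_{i}\xi_{\bar{x}_i}\big(\alpha_i x_i+(1-\alpha_i)u_i\big)$. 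Inserting the Fourier slice theorem for $\bar{P}$ established above, $(\bar{P}_{\boldsymbol{\alpha}}f)\,\hat{}\,(\xi_{\bar{\boldsymbol{x}}})=\hat{f}((\alpha_i\xi_{\bar{x}_i})_{i},((1-\alpha_i)\xi_{\bar{x}_i})_{i})$, replaces $\hat{f}$ by $(\bar{P}f)\,\hat{}$.

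Next I would read off the structure of the resulting expression. The inner integral over $\xi_{\bar{\boldsymbol{x}}}$, weighted by $\prod_{i}|\xi_{\bar{x}_i}|^{1-\beta}$, is by definition $\bar{I}^{\beta-1}\bar{P}f$ evaluated in its second variable at $(\alpha_i x_i+(1-\alpha_i)u_i)_{i=1}^{n}$; the remaining integral over $\boldsymbol{\alpha}\in\mathbb{R}^{n}$ against $\prod_{i}(\alpha_i^{2}+(1-\alpha_i)^{2})^{-\beta/2}$ is, by the definition of $\bar{P}^{*}$, precisely $\bar{P}^{*}_{\beta}$ applied to $\bar{I}^{\beta-1}\bar{P}f$. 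Hence $\bar{I}^{\beta}_{c}f=\bar{P}^{*}_{\beta}\bar{I}^{\beta-1}\bar{P}f$, and applying $\bar{I}^{-\beta}_{c}$ together with the identity $\bar{I}^{-\beta}_{c}\bar{I}^{\beta}_{c}=\mathrm{id}$ (immediate from the definition, exactly as for $I^{\beta}_{c}$) yields the claimed formula $f=\bar{I}^{-\beta}_{c}\bar{P}^{*}_{\beta}\bar{I}^{\beta-1}g$ with $g=\bar{P}f$.

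The points needing care are routine and identical in spirit to the $n=1$ case: the interchange of the $\boldsymbol{\alpha}$– and $\xi_{\bar{\boldsymbol{x}}}$–integrations is legitimate by Fubini, since $f\in\mathcal{S}(\mathbb{R}^{2n})$ forces rapid decay of $(\bar{P}f)\,\hat{}$ and the condition $\beta<2$ keeps each factor $(\alpha_i^{2}+(1-\alpha_i)^{2})^{-\beta/2}|\xi_{\bar{x}_i}|^{1-\beta}$ locally integrable; the hyperplanes $\xi_{x_i}+\xi_{u_i}=0$ on which the substitution degenerates are Lebesgue–null and may be discarded. I expect the only mildly delicate step to be the bookkeeping of the product $\prod_{i=1}^{n}$ under the simultaneous change of variables — keeping Jacobians, norm factors and phase aligned slot by slot — but there is no genuine obstacle, because the absence of a coupling relation lets $\bar{P}$ factor as an $n$–fold tensor of one–dimensional photography transforms, so every one–dimensional estimate and identity simply multiplies.
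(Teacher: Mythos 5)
Your proof is correct and is precisely the ``slight modification'' of the $n=1$ inversion argument that the paper invokes when it omits the proofs in Section~\ref{se:extension}: the slot-by-slot substitution $\xi_{x_i}=\alpha_i\xi_{\bar{x}_i}$, $\xi_{u_i}=(1-\alpha_i)\xi_{\bar{x}_i}$ with Jacobian $|\xi_{\bar{x}_i}|$ tensorises the one-dimensional computation exactly as you describe, and the Fubini/null-set caveats you flag are the right ones. Note also that your use of $\bar{P}^{*}$ (rather than the $P^{*}$ appearing in the paper's displayed definition of $\bar{P}^{*}_{\beta}$) is the correct reading, since the weight $\prod_{i=1}^{n}\big(\alpha_i^{2}+(1-\alpha_i)^{2}\big)^{-\beta/2}$ depends on all of $\boldsymbol{\alpha}\in\mathbb{R}^{n}$ and must be integrated by the dual of $\bar{P}$.
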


Since there exists the coupling relation between the 
variables in $\widetilde{P}$, we use the following assumption to overcome the inadequacy of information in the frequency domain when $\widetilde{P}$ is inverted.

\begin{assumption}\label{assum2}
For $f(\boldsymbol{x},\boldsymbol{u})\in\mathcal{S}$$(\mathbb{R}^{2n})$. 
{\rm (}i{\rm )} if $n$ is even, the Fourier transform of $f$ satisfies
$$
\hat{f} \left(\xi_{\boldsymbol{x}},(s_i\xi_{x_{2i-1}}+t_i\xi_{x_{2i}},s_i\xi_{x_{2i}}-t_i\xi_{x_{2i-1}})_{i=1}^{\frac{n}{2}}\right) = \hat{f}\left(\xi_{\boldsymbol{x}},(s_i\xi_{\boldsymbol{x}_i})_{i=1}^{\frac{n}{2}}\right) \delta(\boldsymbol{t}), 
$$
for all $\boldsymbol{t}=(t_1,\ldots,t_{\frac{n}{2}})^{\top} \in \mathbb{R}^\frac{n}{2}$, $\boldsymbol{s}=(s_1,\ldots,s_{\frac{n}{2}})^{\top} \in \mathbb{R}^\frac{n}{2}$, and $\xi_{\boldsymbol{x}}\in \mathbb{R}^n$;\\
{\rm (}ii{\rm )} if $n$ is odd, the Fourier transform of $f$ satisfies
$$
\hat{f}\left(\xi_{\boldsymbol{x}},(s_i\xi_{x_{2i-1}}+t_i\xi_{x_{2i}},s_i\xi_{x_{2i}}-t_i\xi_{x_{2i-1}})_{i=1}^{\frac{n-1}{2}},\xi_{u_n}\right) = \hat{f}\left(\xi_{\boldsymbol{x}},(s_i\xi_{\boldsymbol{x}_i})_{i=1}^{\frac{n-1}{2}},\xi_{u_n})\right)\delta(\boldsymbol{t}), 
$$
for all $\boldsymbol{t}=(t_1,\ldots,t_{\frac{n-1}{2}})^{\top} \in \mathbb{R}^\frac{n-1}{2}$, $\boldsymbol{s}=(s_1,\ldots,s_{\frac{n-1}{2}})^{\top} \in \mathbb{R}^\frac{n-1}{2}$,  $\xi_{\boldsymbol{x}}\in \mathbb{R}^n$, and $\xi_{u_n}\in \mathbb{R}$.
\end{assumption}

For any $\beta<2$, we define
\begin{equation*}
(\tilde{I}^{\beta}_{c}f)\, \hat{} \, (\xi_{\boldsymbol{x}},\xi_{\boldsymbol{u}}) = 
\begin{cases}
\prod_{i=1}^{n/2}\| \xi_{\boldsymbol{x}_i}\|^{-\beta}\| \xi_{\boldsymbol{u}_i}\|^{-\beta}\hat{f}(\xi_{\boldsymbol{x}},\xi_{\boldsymbol{u}}),  & \text{if} \ n \ \text{is even},\\
\prod_{i=1}^{(n-1)/2}\| \xi_{\boldsymbol{x}_i}\|^{-\beta}\| \xi_{\boldsymbol{u}_i}\|^{-\beta}\|(\xi_{x_n},\xi_{u_n}) \|^{-\beta}\hat{f}(\xi_{\boldsymbol{x}},\xi_{\boldsymbol{u}}), & \text{if} \ n \ \text{is odd},\\
\end{cases}
\end{equation*}
and
\begin{equation*}
(\tilde{I}^{\beta}g)\, \hat{} \, (\boldsymbol{\alpha},\xi_{\boldsymbol{\bar{x}}}) = 
\begin{cases}
\prod_{i=1}^{n/2}\| \xi_{\boldsymbol{\bar{x}}_i} \|^{-\beta}\hat{g}(\boldsymbol{\alpha},\xi_{\boldsymbol{\bar{x}}}), & \text{if} \ n \ \text{is even},\\
\prod_{i=1}^{(n-1)/2}\| \xi_{\boldsymbol{\bar{x}}_i} \|^{-\beta} |\xi_{\bar{x}_n}|^{-\frac{\beta}{2}} \hat{g}(\boldsymbol{\alpha},\xi_{\boldsymbol{\bar{x}}}), & \text{if} \ n \ \text{is odd}.\\
\end{cases}
\end{equation*}
Finally, we give the inversion formula of $\widetilde{P}$.

\begin{proposition}
Let $f(\boldsymbol{x},\boldsymbol{u})\in\mathcal{S}$$(\mathbb{R}^{2n})$ and Assumption \ref{assum2} hold. Denote 
\begin{equation*}
\widetilde{P}^{*}_{\beta}g= \begin{cases}
\widetilde{P}^{*}\Big(\prod_{i=1}^{n/2} |\alpha_i|^{2-\beta}|1-\alpha_i|^{-\beta}g\Big), & \text{if} \ n \ \text{is even},\\
\widetilde{P}^{*}\left(\prod_{i=1}^{\frac{n-1}{2}} |\alpha_i|^{2-\beta}|1-\alpha_i|^{-\beta}\left(\alpha_{\frac{n+1}{2}}^2+(1-\alpha_{\frac{n+1}{2}}^2)^2\right)^{-\frac{\beta}{2}}g\right), & \text{if} \ n \ \text{is odd}.
\end{cases}
\end{equation*}
For any $\beta<2$, we have
\[
f = \tilde{I}^{-\beta}_c \widetilde{P}^{*}_{\beta} \tilde{I}^{2(\beta-1)}g, \quad g=\widetilde{P}f.
\]
\end{proposition}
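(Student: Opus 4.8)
\emph{Proof proposal.} The plan is to run the proof of the inversion theorem for $P$ (formulas (\ref{Invn1}), (\ref{Invn2})) block by block: each pair of coordinates $(x_{2i-1},x_{2i})$ together with $(u_{2i-1},u_{2i})$ is treated exactly as the $n=2$ case there, and, when $n$ is odd, the leftover pair $(x_n,u_n)$ is treated exactly as the $n=1$ case. Since $\widetilde{P}$ coincides with $P$ for $n\le2$ this specializes correctly, so the real content is the bookkeeping for a product of independent blocks.

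First I would expand $\tilde{I}^{\beta}_{c}f$ by Fourier inversion, writing it as $\hat f(\xi_{\boldsymbol x},\xi_{\boldsymbol u})$ integrated against $\prod_{i}\|\xi_{\boldsymbol x_i}\|^{-\beta}\|\xi_{\boldsymbol u_i}\|^{-\beta}$ (with an extra $\|(\xi_{x_n},\xi_{u_n})\|^{-\beta}$ in the odd case) times $e^{2\pi i(\xi_{\boldsymbol x}\cdot\boldsymbol x+\xi_{\boldsymbol u}\cdot\boldsymbol u)}$. In the $i$-th block I make the substitution $\xi_{u_{2i-1}}=s_i\xi_{x_{2i-1}}+t_i\xi_{x_{2i}}$, $\xi_{u_{2i}}=s_i\xi_{x_{2i}}-t_i\xi_{x_{2i-1}}$; its Jacobian supplies $\|\xi_{\boldsymbol x_i}\|^{2}$ and $\|\xi_{\boldsymbol u_i}\|^{-\beta}=(s_i^{2}+t_i^{2})^{-\beta/2}\|\xi_{\boldsymbol x_i}\|^{-\beta}$, so block $i$ carries $(s_i^{2}+t_i^{2})^{-\beta/2}\|\xi_{\boldsymbol x_i}\|^{2-2\beta}$ and the exponent acquires an off-diagonal $t_i$-term. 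Applying Assumption \ref{assum2}(i) to replace $\hat f$ by $\hat f(\xi_{\boldsymbol x},(s_i\xi_{\boldsymbol x_i})_i)\,\delta(\boldsymbol t)$ and integrating out $\boldsymbol t$ sets every $t_i=0$: this collapses $(s_i^{2}+t_i^{2})^{-\beta/2}$ to $|s_i|^{-\beta}$, kills the off-diagonal terms, and leaves the $i$-th block with $|s_i|^{-\beta}\|\xi_{\boldsymbol x_i}\|^{2-2\beta}$ and phase $\xi_{\boldsymbol x_i}\cdot(\boldsymbol x_i+s_i\boldsymbol u_i)$.

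Next I substitute $s_i=1/\alpha_i-1$ and $\xi_{\boldsymbol x_i}=\alpha_i\xi_{\bar{\boldsymbol x}_i}$ (combined Jacobian $1$ per block), turning $|s_i|^{-\beta}$ into $|\alpha_i|^{\beta}|1-\alpha_i|^{-\beta}$, $\|\xi_{\boldsymbol x_i}\|^{2-2\beta}$ into $|\alpha_i|^{2-2\beta}\|\xi_{\bar{\boldsymbol x}_i}\|^{2(1-\beta)}$ (total $\alpha_i$-weight $|\alpha_i|^{2-\beta}|1-\alpha_i|^{-\beta}$), and the phase into $\xi_{\bar{\boldsymbol x}_i}\cdot(\alpha_i\boldsymbol x_i+(1-\alpha_i)\boldsymbol u_i)$; in the odd case the leftover coordinate is handled by $\xi_{x_n}=\alpha_{(n+1)/2}\xi_{\bar x_n}$, $\xi_{u_n}=(1-\alpha_{(n+1)/2})\xi_{\bar x_n}$, which after the Jacobian $|\xi_{\bar x_n}|$ turns $\|(\xi_{x_n},\xi_{u_n})\|^{-\beta}$ into $\bigl(\alpha_{(n+1)/2}^{2}+(1-\alpha_{(n+1)/2})^{2}\bigr)^{-\beta/2}|\xi_{\bar x_n}|^{1-\beta}$. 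Then I invoke the Fourier slice theorem for $\widetilde{P}$ from the Proposition above to rewrite $\hat f$ at these coupled arguments as $(\widetilde{P}f)\,\hat{}\,(\boldsymbol\alpha,\xi_{\bar{\boldsymbol x}})$, recognize the inner $\xi_{\bar{\boldsymbol x}}$-integral as $\tilde{I}^{2(\beta-1)}\widetilde{P}f$ evaluated at the point whose $i$-th block equals $\alpha_i\boldsymbol x_i+(1-\alpha_i)\boldsymbol u_i$ (and, for odd $n$, whose last coordinate equals $\alpha_{(n+1)/2}x_n+(1-\alpha_{(n+1)/2})u_n$), and recognize the outer $\boldsymbol\alpha$-integral as $\widetilde{P}^{*}$ applied to the product of that with the assembled weight, i.e.\ as $\widetilde{P}^{*}_{\beta}\,\tilde{I}^{2(\beta-1)}\widetilde{P}f$. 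This yields $\tilde{I}^{\beta}_{c}f=\widetilde{P}^{*}_{\beta}\,\tilde{I}^{2(\beta-1)}\widetilde{P}f$, and applying $\tilde{I}^{-\beta}_{c}$ to both sides (using $\tilde{I}^{-\beta}_{c}\tilde{I}^{\beta}_{c}=\mathrm{id}$, checked on symbols exactly as $I^{-\beta}_{c}I^{\beta}_{c}=\mathrm{id}$) gives the claim.

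The Jacobian computations, the a.e.\ validity of the block substitutions (the $i$-th is invertible wherever $\xi_{\boldsymbol x_i}\neq0$, the $\alpha_i$-substitution wherever $\alpha_i\neq0$), and the Fubini interchanges for Schwartz $f$ and $\beta<2$ all go exactly as in the proofs of Theorems \ref{FandC}--\ref{normal} and of the inversion formulas (\ref{Invn1}), (\ref{Invn2}). The one point that genuinely needs care is the exponent bookkeeping: one must verify that the $|\alpha_i|$- and $|1-\alpha_i|$-powers produced by the $i$-th block assemble precisely into the weight defining $\widetilde{P}^{*}_{\beta}$, while the $\|\xi_{\bar{\boldsymbol x}_i}\|$- and $|\xi_{\bar x_n}|$-powers assemble precisely into the symbol of $\tilde{I}^{2(\beta-1)}$; in particular, the half-power $|\xi_{\bar x_n}|^{-\beta/2}$ appearing in the definition of $\tilde{I}^{\beta}$ for odd $n$ is exactly what is needed for the odd leftover block to match the even ones.
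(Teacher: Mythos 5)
The paper omits the proof of this proposition, saying only that it is a slight modification of the inversion proofs for $P$ with $n=1,2$; your block-by-block argument is precisely that modification, and your Jacobian and exponent bookkeeping (in particular the role of the half-power $|\xi_{\bar x_n}|^{-\beta/2}$ in $\tilde I^{\beta}$ for odd $n$) checks out. Note that your derivation produces the weight $\bigl(\alpha_{(n+1)/2}^{2}+(1-\alpha_{(n+1)/2})^{2}\bigr)^{-\beta/2}$ for the leftover odd coordinate, consistent with the $n=1$ case of $P^{*}_{\beta}$, which indicates that the $\bigl(1-\alpha_{(n+1)/2}^{2}\bigr)^{2}$ appearing in the paper's statement is a typo.
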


\section{\bf Conclusion and remarks}

In this paper, three types of photography transform $P$, $\bar{P}$, and $\widetilde{P}$ are defined from an application in the real world, that is, to reconstruct the light field from the focal stack. We clarify the equivalent relation between these photography transforms and the Radon transform by introducing some coupled Radon transforms. Several properties of the photography transforms are obtained, which are related to the Fourier transform, convolution, and their dual operator. More importantly, the analytic inversion formulas of these photography transforms are also given under appropriate assumptions. All of the above results show the similarities and differences between the photography transform and the classic Radon transform. 

Some additional remarks:
\begin{itemize}
\item For different photography transforms we need to note that $P$ has a stronger coupling relation between variables than $\widetilde{P}$, while $\bar{P}$ dose not have coupling relation between variables. The stronger coupling relation variables in integral means the less dimensional measurement will be, which makes the inversion more difficult. In this paper, although the inversion formulas of $\bar{P}$ and $\widetilde{P}$ for the arbitrary $n$-dimensional case have been obtained, we only derive the analytic inversion formulas for $P$ when $n=1,2$.

\item This paper mainly aims to establish a theoretical analysis of the photography transform $P$, so it is assumed that the measurement $Pf$ is known for any real number $\alpha$. In practice, we can only sample a few non-negative discrete points of $\alpha$ for $Pf$ when $n=2$, so the use of exact inversion formula (\ref{Invn2}) will face the difficulty of the incomplete data similar to limited-angle and sparse-view reconstruction in CT \cite{Si}. How to construct a stable and efficient numerical algorithm for the light filed reconstruction from the insufficient measurement of the focal stack is a big challenge in practice.
\end{itemize}
\section*{\bf Acknowledgement}
\noindent
This work was partially supported by the National Natural Science Foundation of China (Grant No. 61931003), and partially supported by the China Scholarship Council (Grant No. 202307090070). The authors would like to thank T. Shan for helpful comments and suggestions.

\end{document}